\documentclass[12pt,a4paper,reqno]{amsart}
\usepackage{amsmath,latexsym,epsfig,amsfonts,amssymb,graphicx}

\newtheorem{theorem}{Theorem}[section]
\newtheorem{lemma}[theorem]{Lemma}
\newtheorem{proposition}[theorem]{Proposition}
\newtheorem{corollary}[theorem]{Corollary}

\theoremstyle{definition}

\newtheorem{remark}[theorem]{Remark}
\newtheorem{opening}[theorem]{Question}
\theoremstyle{plain}

\newcommand{\R}{\ensuremath{\mathbb{R}}}

\newcommand{\Diffeo}{\ensuremath{\textup{Diffeo}}}

\renewcommand{\leq}{\leqslant}
\renewcommand{\geq}{\geqslant}
\renewcommand{\le}{\leqslant}

\numberwithin{equation}{section}

\setlength{\parskip}{6pt}
\addtolength{\textwidth}{3 truecm}
\setlength{\hoffset}{-1.3 truecm}

\title[Reversibility of diffeomorphisms]{Reversibility in the diffeomorphism group of the real line}

\author{Anthony  G. O'Farrell}
\address{Department of Mathematics, National Univeristy of Ireland Maynooth, Maynooth, County Kildare, Ireland}
\email{anthonyg.ofarrell@gmail.com}

\author{Ian Short}
\email{ims25@cam.ac.uk}
\thanks{Both authors were supported by Science Foundation Ireland grant 05/RFP/MAT0003. The first author was also supported by the ESF Network HCAA. The authors are grateful to \'Etienne Ghys for valuable
assistance.}

\subjclass[2000]{Primary: 37C05, 37E05; Secondary: 37C15.}

\keywords{Diffeomorphism, reversible, involution, conjugacy.}

\date{\today}


\begin{document}

\begin{abstract}
An element of a group is said to be \emph{reversible} if it is
conjugate to its inverse. We characterise the reversible
elements in the group of diffeomorphisms of the 
real line, and in the subgroup of order preserving
diffeomorphisms. 
\end{abstract}

\maketitle

\section{Introduction}\label{S: intro}

An element of a group is \emph{reversible} if it is
conjugate to its inverse. We say that such an element is \emph{reversed} by its conjugator. A \emph{diffeomorphism} of the real line $\mathbb{R}$ is an infinitely differentiable homeomorphism of $\mathbb{R}$ whose derivative never vanishes. We consider the group $\Diffeo(\R)$ of all diffeomorphisms of  $\R$, and the subgroup $\Diffeo^+(\R)$ of order preserving  diffeomorphisms. The object of this paper is to characterise the reversible elements in each of these two groups.

An \emph{involution} in a group is an element of order two. One way to obtain a reversible element is to form the product of two involutions.  Such an element is reversed by each of the two involutions, and conversely,  an element that is reversed by an involution  can be expressed as a product of two involutions. Elements reversed by involutions are called \emph{strongly reversible}. The only involution in $\Diffeo^+(\R)$ is the identity map.  There are many non-trivial involutions in $\Diffeo(\R)$, but all are conjugate to the map $x\mapsto -x$. 

Interest in reversibility originates from the theory of time-reversible symmetry in dynamical systems, and background to the subject can be found in \cite{De1976,LaRo1998}. Finite group theorists use the terms \emph{real} and \emph{strongly real} instead of \emph{reversible} and \emph{strongly reversible}, because an element $g$ of a finite group $G$ is reversible if and only if each irreducible character of $G$ takes a real value when applied to $g$. Reversibility in the homeomorphism group of the real line has been considered before by Jarczyk \cite{Ja2002} and Young \cite{Yo1994}. See also \cite{FiSc1955,OF2004}. Reversibility in the group of invertible formal power series was considered by O'Farrell in \cite{OF2008}. Previously, in \cite{Ca1971}, Calica had studied reversibility in groups of germs of homeomorphisms and diffeomorphisms that fix $0$. Reversibility in the diffeomorphism group of the real line is of particular interest because, whilst it is difficult to fully classify conjugacy in $\Diffeo^+(\R)$ and $\Diffeo(\R)$, we are able to give a complete account of reversibility.

The main results of the paper follow.

\begin{theorem}\label{theorem-reversible-diffeo+}
An element of $\Diffeo^+(\R)$ is reversible if and only if it 
is conjugate to a
map $f$ in $\Diffeo^+(\R)$ that fixes each integer and satisfies
\begin{equation}\label{E: wave}
f(x+1) = f^{-1}(x)+1,\qquad x\in \R.
\end{equation}
\end{theorem}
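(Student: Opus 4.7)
The ``if'' direction is immediate. Setting $T(x) = x+1$, equation \eqref{E: wave} is exactly $T^{-1} f T = f^{-1}$, so $T$ reverses $f$, and anything conjugate to $f$ is reversible. For the ``only if'' direction, suppose $g \in \Diffeo^+(\R)$ is reversed by some $h \in \Diffeo^+(\R)$, and $g \neq \mathrm{id}$ (if $g = \mathrm{id}$ take $f = \mathrm{id}$). My plan is to first replace $h$ by a fixed-point-free reversing element $h'$, then smoothly conjugate $h'$ to $T$ by some $\phi \in \Diffeo^+(\R)$, and finally compose $\phi$ on the left with a translation to place a fixed point of $f = \phi g \phi^{-1}$ at $0$. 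The reversing relation then yields \eqref{E: wave}, and the recursion $f(n+1) = f^{-1}(n) + 1 = n+1$ gives $f(n) = n$ for every integer.

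The structural groundwork begins with $F = \mathrm{Fix}(g)$. Since $\mathrm{Fix}(g) = \mathrm{Fix}(g^{-1})$, the equation $h^{-1} g h = g^{-1}$ gives $h(F) = F$; and any fixed point $x$ of $h$ must lie in $F$, because $h(x) = x$ yields $g(x) = g^{-1}(x)$, so $g^2(x) = x$, and the monotonicity of $g$ forces $g(x) = x$. Labelling each complementary interval $I$ of $F$ of type $+$ or $-$ according as $g > \mathrm{id}$ or $g < \mathrm{id}$ on $I$, the relation $h^{-1} g h = g^{-1}$ implies that $h$ sends each complementary interval to one of opposite type. Hence $F$ cannot be empty (the whole line would then be of both types) and cannot be bounded above or below (an unbounded complementary half-line would map to itself while needing to swap type); so $F$ is unbounded on both sides.

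To produce $h'$, I use that for any $c$ in the centraliser $C(g)$ the element $ch$ also reverses $g$, so it suffices to pick $c$ with $ch$ fixed-point-free. The fixed points of $h$ lie in $F$, with none in the complementary intervals (which are type-swapped). When the connected components of $F$ form a bi-infinite discrete sequence, the induced action of $h$ on them must be a shift by a nonzero amount---otherwise $h$ would preserve some complementary interval and violate the type swap---and $h$ is already fixed-point-free. In general one must patch using elements of $C(g)$ that rearrange the components of $F$ while respecting the type constraints; this patching, which mixes an order-theoretic analysis of how $h$ permutes the components of $F$ with smooth interpolation over the components that are intervals, is the part I expect to be the main obstacle.

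Once $h'$ is constructed, replace it by $(h')^{-1}$ if needed so that $h' > \mathrm{id}$, and choose $\phi \in \Diffeo^+(\R)$ with $\phi h' \phi^{-1} = T$; this relies on the standard fact that every fixed-point-free order-preserving $C^\infty$ diffeomorphism of $\R$ is $C^\infty$-conjugate to $T$. Then $f = \phi g \phi^{-1}$ satisfies $T^{-1} f T = f^{-1}$, which is \eqref{E: wave}. Since $g$ has fixed points, so does $f$; composing $\phi$ on the left with the translation $x \mapsto x - q$ for one such fixed point $q$ of $f$ places a fixed point at $0$ while preserving \eqref{E: wave}, since that translation commutes with $T$. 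The induction noted above then completes the proof.
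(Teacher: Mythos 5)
Your ``if'' direction and the final normalisation (conjugating $h'$ to $x\mapsto x+1$ via Sternberg, translating a fixed point to $0$, and inducting to get $f(n)=n$) match the paper. The gap is in the middle: you need a fixed-point-free order preserving reverser, and you propose to \emph{build} one by multiplying $h$ by elements of the centraliser of $g$ and ``patching'' --- a step you yourself flag as the main obstacle and do not carry out. That step is genuinely problematic as described: the centraliser of $g$ in $\Diffeo^+(\R)$ can be badly behaved, and the smooth interpolation you would need across accumulation points of $\mathrm{fix}(g)$ is exactly the kind of construction that Kopell-type rigidity obstructs. As written, the proof is incomplete at its central point.

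The idea you are missing is that no construction is necessary: in $\Diffeo^+(\R)$ (unlike $\mathrm{Homeo}^+(\R)$), \emph{every} order preserving reverser of a non-identity diffeomorphism is automatically fixed-point-free. This is the paper's Lemma~\ref{L: useful2}, proved via Kopell's commuting-diffeomorphisms lemma (Lemma~\ref{L: Kopell1}): if $h$ fixed a point, take a component $(a,b)$ of the complement of $\mathrm{fix}(h)$ with a finite endpoint; by your own Lemma-\ref{L: useful1}-style observation $g$ preserves $(a,b)$, the reversing relation forces $g$ to have a fixed point inside $(a,b)$, and then Kopell's lemma applied to $g$ and the commuting map $h^2$ (which is fixed-point-free on $(a,b)$) forces $g=\mathrm{id}$ on $(a,b)$, hence everywhere --- contradicting $g\neq\mathrm{id}$. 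With that lemma in hand your outline closes up and coincides with the paper's proof; without it, or some substitute argument of comparable strength, the proposal does not go through.
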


Theorem~\ref{theorem-reversible-diffeo+} gives us an explicit method to generate all order preserving reversible diffeomorphisms.

\begin{theorem}\label{theorem-reversible-diffeo}
Each reversible element of $\Diffeo(\R)$ is either (i) strongly reversible,
or (ii) an element of $\Diffeo^+(\R)$ that is reversible in
$\Diffeo^+(\R)$.
\end{theorem}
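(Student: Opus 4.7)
My plan is to split on the orientation of $f$ and, when $f$ is order-preserving, on the orientation of the given conjugator.

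\textbf{Order-preserving case.} Suppose $f \in \Diffeo^+(\R)$ is reversed by $h \in \Diffeo(\R)$. If $h$ is order-preserving, then conclusion (ii) is immediate. If $h$ is order-reversing I will establish conclusion (i) by producing an order-reversing involution $\tau$ that reverses $f$; then $f = \tau \cdot (\tau f)$ expresses $f$ as a product of two order-reversing involutions. The inputs are: the element $h^2$, which commutes with $f$ and so lies in the centraliser $C_{\Diffeo^+(\R)}(f)$; a choice of fundamental domain for $f$; and the freedom to replace $h$ by any element of the coset $h\,C_{\Diffeo^+(\R)}(f)$. The plan is to define $\tau$ on the fundamental domain (guided by $h$) and extend equivariantly so that $\tau f = f^{-1}\tau$, adjusting $h$ within its coset to force $\tau^2 = \mathrm{id}$ and smoothness at the boundary of the fundamental domain.

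\textbf{Order-reversing case.} Suppose $f \in \Diffeo^-(\R)$ is reversible. For order-reversing $f$, strong reversibility is equivalent to $f$ being an involution: in any factorisation $f = \alpha\beta$ into involutions one factor must be order-preserving (hence the identity) and the other order-reversing (hence equal to $f$). So it suffices to prove $f^2 = \mathrm{id}$. Since $f$ is order-reversing it has a unique fixed point, which after conjugation is $0$; the conjugator $h$ permutes fixed points of $f^{-1}$, so $h(0) = 0$, and the chain rule applied to $hfh^{-1} = f^{-1}$ at $0$ gives $f'(0)^2 = 1$ and hence $f'(0) = -1$. Set $g = f^2 \in \Diffeo^+(\R)$: then $g$ commutes with $f$ and is reversed by whichever of $h,\ hf$ is order-preserving, so $g$ is reversible in $\Diffeo^+(\R)$. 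The commutation $fg = gf$ together with $f$'s interchange of the half-lines meeting at $0$ forces a rigid $f$-antisymmetry of $g$'s orbit structure, and combined with the classification of $\Diffeo^+(\R)$-reversibles from Theorem 1.1 this should force $g = \mathrm{id}$, so $f$ is an involution.

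\textbf{Main obstacle.} The hard parts are the construction of the involution $\tau$ in the order-preserving case with an order-reversing conjugator, and ruling out non-trivial $g = f^2$ in the order-reversing case. Both require detailed understanding of centralisers in $\Diffeo^+(\R)$ (for fixed-point-free diffeomorphisms the Szekeres flow is the standard tool) and of how the reversibility relation interacts with the global dynamical structure of $f$. In each case, Theorem 1.1 supplies the essential structural input: on the order-preserving side it governs the shape of the fundamental domain and the compatibility conditions on $\tau$; on the order-reversing side it restricts the possible dynamics of $g$ severely enough that the $f$-antisymmetry rules out every non-trivial possibility.
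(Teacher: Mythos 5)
Your case division is the same as the paper's, and your treatment of the order-reversing case is essentially on the right track, but in both branches the step you defer as a ``hard part'' is precisely the mathematical content of the theorem, and the tools you point to would not close it. In the order-reversing case, after reducing to an order-preserving reverser $h'$ of $g=f^2$ with $h'(0)=0$, what kills $g$ is not Theorem~\ref{theorem-reversible-diffeo+} plus ``$f$-antisymmetry'': Theorem~\ref{theorem-reversible-diffeo+} admits plenty of non-identity reversible maps, and nothing in it is contradicted by $g$ carrying an extra symmetry. The decisive fact is that an order preserving diffeomorphism reversed by an order preserving diffeomorphism \emph{with a fixed point} must be the identity (Lemma~\ref{L: useful2}), proved by applying Kopell's lemma on commuting $C^2$ maps (Lemma~\ref{L: Kopell1}) to $g$ and $(h')^2$ on the components of the complement of $\mathrm{fix}(h')$. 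You need this lemma or an equivalent; it is not a formal consequence of the classification of reversibles, and it is also exactly what fails for mere homeomorphisms (where non-involutive order-reversing reversible maps do exist).

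In the order-preserving case with an order-reversing reverser $h$, the natural candidate for $\tau$ is the one you gesture at: place the unique fixed point of $h$ at $0$ and set $\tau=h$ on $[0,\infty)$ and $\tau=h^{-1}$ on $(-\infty,0)$. This is automatically an involutive homeomorphism conjugating $f$ to $f^{-1}$; the entire difficulty is smoothness at $0$, and ``adjusting $h$ within its coset'' only helps in the one easy case where $f$ is the identity near $0$. In the remaining cases the paper shows smoothness is \emph{forced} rather than arranged: if $0$ is a fixed point of $f$ that is not interior to $\mathrm{fix}(f)$ and $T_0f=X$, Kopell's Lemma 1(b) (Lemma~\ref{L: Kopell2}) applied to $f$ and the commuting map $h^2$ gives the needed jet condition; if $T_0f\neq X$, a separate result on reversibility of formal power series (Lemma~\ref{lemma-formal-reversible}) shows $T_0h$ is an involution; and if $0$ lies in a component $(a,b)$ of the complement of $\mathrm{fix}(f)$, Kopell's lemma forces $h^2=\mathrm{id}$ on $[a,b]$. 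None of this is supplied by Szekeres flows or centraliser generalities, and without it your $\tau$ is only a homeomorphism, which does not give strong reversibility in $\Diffeo(\R)$. (The fixed-point-free case of $f$ should be split off first; it is trivial via conjugation to $x\mapsto x+1$, which is reversed by $x\mapsto -x$.)
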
 

The alternatives (i) and (ii) are not exclusive. If $f$ is an order reversing reversible diffeomorphism then Theorem~\ref{theorem-reversible-diffeo} tells us that it is strongly reversible. Composing two non-trivial involutions in $\Diffeo(\R)$ gives rise to an order preserving diffeomorphism; hence $f$ must be an involution. 


We prove the following result about composites of reversible maps.

\begin{theorem}\label{T: +reversibleComposite}
Each member of $\Diffeo^+(\R)$ can be expressed as a composite of four reversible diffeomorphisms.
\end{theorem}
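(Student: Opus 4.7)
My plan is to factor every $g\in\Diffeo^+(\R)$ as a product of four involutions of $\Diffeo(\R)$. Since any involution $\iota$ satisfies $\iota^{-1}=\iota$, it is trivially conjugate to its own inverse via the identity; hence every involution is a reversible element of $\Diffeo(\R)$, and such a factorisation would establish the theorem.

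First I would show that every $g\in\Diffeo^+(\R)$ can be written $g=h_1h_2$ with $h_1,h_2\in\Diffeo^+(\R)$ both fixed-point-free. To achieve this, pick a smooth $h_2\in\Diffeo^+(\R)$ whose graph lies strictly above both the diagonal $y=x$ and the graph $y=g(x)$---for instance, any smoothing of $x\mapsto\max\{x,g(x)\}+1$. Then $h_2(x)>x$ for every $x$, so $h_2$ has no fixed points, and a fixed point of $h_1:=g h_2^{-1}$ would force $g(x)=h_2(x)$ at $x=h_2^{-1}(y)$, which is excluded. Note $h_2>\operatorname{id}$ and $h_1<\operatorname{id}$. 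A classical Borel-type extension argument (matching Taylor coefficients at successive iterates) then shows that $h_2$ is smoothly conjugate in $\Diffeo^+(\R)$ to the unit translation $T(x)=x+1$ and $h_1$ to $T^{-1}$; write $h_i=\phi_i T^{\epsilon_i}\phi_i^{-1}$ with $\epsilon_i\in\{+1,-1\}$ and $\phi_i\in\Diffeo^+(\R)$.

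Next I would factor the translation. For $a\in\R$, define $\iota_a(x)=a-x$, a smooth orientation-reversing involution of $\R$. A direct calculation gives $(\iota_1\iota_0)(x)=1-(-x)=x+1$, so $T=\iota_1\iota_0$, and symmetrically $T^{-1}=\iota_0\iota_1$. Substituting back, each $h_i=(\phi_i\iota_{\epsilon_i}\phi_i^{-1})(\phi_i\iota_0\phi_i^{-1})$ is a product of two involutions of $\Diffeo(\R)$ (since conjugation preserves the involution property), and consequently $g=h_1h_2$ is a product of four such involutions.

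No substantive obstacle arises once the key observation is made: the affine orientation-reversing involutions $x\mapsto a-x$ suffice to generate all translations, and translations conjugate in $\Diffeo^+(\R)$ onto arbitrary fixed-point-free diffeomorphisms. The one technical point deserving care is the smoothness of the conjugating maps $\phi_i$ in the reduction step: the restriction $\phi_i|_{[0,1]}$ must be chosen so that the functional relation $\phi_i(x+1)=h_i(\phi_i(x))$ propagates to a $C^\infty$ diffeomorphism of all of $\R$, which by a standard Borel-jet argument amounts to matching the infinite jets of $\phi_i$ at the endpoints $0$ and $1$.
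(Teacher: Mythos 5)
There is a genuine gap, and it lies in what ``reversible diffeomorphism'' must mean in this theorem. Your four factors are (conjugates of) the order reversing involutions $x\mapsto a-x$; these are not elements of $\Diffeo^+(\R)$ at all, and they are reversible only in the ambient group $\Diffeo(\R)$. The theorem, as the surrounding text makes clear, asserts that $g$ is a composite of four elements of $\Diffeo^+(\R)$ each reversible \emph{in} $\Diffeo^+(\R)$: the paper explicitly leaves open whether four can be reduced to three or two in that setting, whereas with your reading the answer would trivially be two, since Theorem~\ref{T: 4involutions} already shows every element of $\Diffeo(\R)$ is a composite of four involutions, hence of two strongly reversible maps. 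In other words, what you prove is essentially (the order preserving case of) Theorem~\ref{T: 4involutions}, not this theorem. Your argument is internally sound as far as it goes --- the splitting $g=h_1h_2$ with $h_2>\mathrm{id}$ and $h_1<\mathrm{id}$ both fixed point free is exactly the paper's first step, and the conjugacy of a fixed point free map to a unit translation is Sternberg's Lemma~\ref{L: fixedPointFree} --- but from that point the route cannot reach the intended conclusion.

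The missing content is the paper's Lemma~\ref{L: +reversibleComposite}: each fixed point free element of $\Diffeo^+(\R)$ is a composite of two maps that are reversible in $\Diffeo^+(\R)$. This is the hard part. The paper proves it by exhibiting one such factorisation (which suffices, by conjugating via Sternberg's lemma): it takes a reversible map $f$ satisfying $f(x+1)=f^{-1}(x)+1$ with $f>\mathrm{id}$ on $(0,1)$, and then constructs a second order preserving reversible map $g$, commuting with $x\mapsto x+2$ and reversed by an explicitly built order preserving conjugator $k$, with $g<f$ everywhere, so that $g^{-1}f$ is fixed point free; the construction needs Borel-type jet matching at the gluing points to keep everything $C^\infty$. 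Nothing in your proposal produces order preserving reversible factors, so you would need to add an argument of this kind to close the gap.
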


We do not know whether each element of $\Diffeo^+(\R)$ can be expressed as a composite of three, or even two, reversible elements. We also prove the following  result about composites of involutions.

\begin{theorem}\label{T: 4involutions}
Each member of $\Diffeo(\R)$ can be expressed as a composite of four involutions.
\end{theorem}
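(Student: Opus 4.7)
The plan is to split $\Diffeo(\R)$ into its order-preserving and order-reversing parts and handle each separately, using the parity observation that every non-trivial involution in $\Diffeo(\R)$ is order-reversing (they are all conjugate to $x\mapsto -x$). Hence a product of four non-trivial involutions lies in $\Diffeo^+(\R)$, while a product of three non-trivial involutions together with the identity lies in the order-reversing component; so I would target four-involution decompositions of the former type for an order-preserving $f$, and of the latter type (three non-trivial involutions plus the identity) for an order-reversing $f$.

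For an order-reversing $f$, the unique fixed point $p$ of $f$ (existence by the intermediate value theorem applied to $f(x)-x$, uniqueness by strict monotonicity) provides a natural involution $\iota_1$ fixing $p$, for instance a smooth conjugate of $x\mapsto 2p-x$. The composition $g=\iota_1\circ f$ is then order-preserving and fixes $p$. The aim is to choose $\iota_1$ so that $g$ is strongly reversible in $\Diffeo(\R)$, whence $g=\iota_2\circ\iota_3$ and $f=\iota_1\circ\iota_2\circ\iota_3$. The considerable freedom in $\iota_1$ -- any smooth involution fixing $p$ is admissible -- should be enough to arrange the symmetry required for strong reversibility of $g$, by prescribing $\iota_1$ near $p$ to match the local structure of $f$.

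For an order-preserving $f\in\Diffeo^+(\R)$, the central lemma required is that $f$ can be written as a product $s_1 s_2$ of two strongly reversible elements; each $s_i$ is then a product of two involutions, giving the desired four-involution decomposition. I would construct $s_1$ by first fixing an involution $\alpha\in\Diffeo(\R)$ and prescribing $s_1$ on a fundamental half-line for $\alpha$, then extending to all of $\R$ by the relation $\alpha s_1\alpha=s_1^{-1}$ (which defines strong reversibility of $s_1$ by $\alpha$). The remaining factor $s_2=s_1^{-1}f$ must then be shown to be strongly reversible by some involution $\beta$; enough flexibility remains in the choice of $s_1$ on the fundamental domain to arrange this, working with the induced dynamics of $f$ on a fundamental domain for $s_1$.

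The main obstacle is the order-preserving case: not every element of $\Diffeo^+(\R)$ is reversible, so neither Theorem~\ref{theorem-reversible-diffeo+} nor Theorem~\ref{T: +reversibleComposite} applies directly, and the two strongly reversible factors must be built by hand rather than extracted from a prior classification. The delicate technical point is smoothness -- the involutions produced by the piecewise construction via the symmetry relations must be of class $C^\infty$ at their fixed points and at the junctions along the fundamental domain, and this matching of infinitely many derivatives is precisely what separates the diffeomorphism setting from the comparatively easier homeomorphism setting.
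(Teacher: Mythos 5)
There is a genuine gap, and it is the same gap in both halves of your argument: at the decisive moment you write that ``enough flexibility remains'' to arrange strong reversibility of the second factor, but that is precisely where the content of the theorem lies, and it is not justified. The single idea you are missing is Lemma~\ref{L: fixedFreeReverse}: a \emph{fixed point free} element of $\Diffeo^+(\R)$ is automatically strongly reversible, because by Sternberg's theorem (Lemma~\ref{L: fixedPointFree}) it is conjugate to $x\mapsto x+1$, which is reversed by $x\mapsto -x$. The paper therefore chooses the first involution $\tau$ not to fix the fixed point of $f$ but to satisfy $\tau(x)>f(x)$ for all $x$, so that $\tau f(x)>x$ everywhere; then $\tau f$ is fixed point free, hence a product of two involutions, and every order reversing map is a product of three involutions (Proposition~\ref{P: 3involutions}). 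The order preserving case follows by applying this to $-f$ and prepending $x\mapsto -x$, giving four involutions.

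Your construction steers in exactly the opposite direction: by insisting that $\iota_1$ fix the fixed point $p$ of $f$, you force $g=\iota_1 f$ to have a fixed point, and it is the diffeomorphisms \emph{with} fixed points whose strong reversibility is genuinely restrictive. Topologically, the fixed point set and signature of $g$ must be symmetric under the reversing involution (the example with signature \eqref{E: sequence} shows this fails generically), and smoothly one must match infinitely many Taylor coefficients at $p$ and at accumulation points of $\mathrm{fix}(g)$ --- this is what forces the Kopell and formal-power-series machinery in Proposition~\ref{P: +-strongReverse}. Writing out your requirement explicitly, with $\iota_+$ denoting the free branch of $\iota_1$ on $[p,\infty)$, the condition that $g$ be reversed by a given involution becomes a nontrivial functional equation coupling $\iota_+$ to itself through $f$, and you give no argument that it has a smooth solution. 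The same criticism applies verbatim to your order preserving case, where $s_2=s_1^{-1}f$ must be shown strongly reversible. The fixed-point-free trick dissolves all of these difficulties at once, which is why the paper's proof is three lines long.
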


The number four in this theorem is sharp because each order preserving diffeomorphism that is not strongly reversible cannot be expressed as a composite of three involutions. An obvious corollary of Theorem~\ref{T: 4involutions} is that each element of $\Diffeo(\R)$ can be expressed as a composite of two (strongly) reversible diffeomorphisms.

The structure of the paper is as follows. Section \ref{S: background} contains relevant background material. Then in Section \ref{S: +} we focus on the group $\Diffeo^+(\R)$, and prove all our results related to that group, including Theorems~\ref{theorem-reversible-diffeo+} and \ref{T: +reversibleComposite}. Section \ref{S: +-} is about the group $\Diffeo(\R)$, and in that section we prove all our results about $\Diffeo(\R)$, including Theorems~\ref{theorem-reversible-diffeo} and \ref{T: 4involutions}. Finally, in Section \ref{S: open questions} we list some open problems.

\section{Background results}\label{S: background}

An element $f$ of a group $G$ is \emph{reversible} if there is another element $h$ in $G$ such that $hfh^{-1}=f^{-1}$. We say that $h$ \emph{reverses} $f$, or that $f$ is \emph{reversed} by $h$. If $g$ is an element of $G$ that is conjugate to $f$ then $g$ is also reversible. We denote the fixed point set of  a homeomorphism $g$ by $\text{fix}(g)$. Listed below are several results about diffeomorphisms of the real line. 

\begin{lemma}[Sternberg, \cite{St1957}]\label{L: fixedPointFree} 
Each fixed point free member of $\Diffeo^+(\R)$ is conjugate either to the map $x\mapsto x+1$ or the map $x\mapsto x-1$.
\end{lemma}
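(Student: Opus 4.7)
The plan is to reduce to the case $f(x)>x$ everywhere and then construct a smooth conjugacy to $T\colon x\mapsto x+1$ by choosing a diffeomorphism on one fundamental interval and extending by the functional equation.

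Since $f-\mathrm{id}$ is continuous and never vanishes, it has constant sign; after possibly replacing $f$ by $f^{-1}$, I may assume $f(x)>x$ for all $x\in\R$. Fix any $x_0\in\R$ and set $x_n=f^n(x_0)$. The sequence $(x_n)$ is strictly increasing and cannot be bounded above (the limit would be a fixed point), so $x_n\to+\infty$; similarly $x_{-n}\to-\infty$. Hence $\R=\bigsqcup_{n\in\Z}[x_n,x_{n+1})$, and $[x_0,x_1]$ serves as a fundamental domain for the action of $\langle f\rangle$.

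I would then look for $h\in\Diffeo^+(\R)$ satisfying $h\circ f=T\circ h$. Any such $h$ is determined by its restriction $h_0$ to $[x_0,x_1]$ via $h(x)=h_0(f^{-n}(x))+n$ on $[x_n,x_{n+1})$, and the functional equation then holds tautologically. Smoothness on each open interval $(x_n,x_{n+1})$ is automatic, and because the relation $h(y)=h(f^{-1}(y))+1$ shows that smoothness propagates from $x_n$ to $x_{n+1}$ via the smooth diffeomorphism $f^{-1}$, it suffices to make $h$ smooth at the single point $x_1$. By repeated application of the chain rule, the $k$th derivative of $h$ at $x_1^+$ is a universal polynomial in $h_0'(x_0),\dots,h_0^{(k)}(x_0)$ and in the derivatives of $f^{-1}$ at $x_1$; smoothness at $x_1$ thus reduces to a concrete jet-matching condition between the two endpoints of $[x_0,x_1]$.

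To realise this, I would prescribe the jet of $h_0$ at $x_0$ arbitrarily (say $h_0(x_0)=0$, $h_0'(x_0)=1$, $h_0^{(k)}(x_0)=0$ for $k\ge 2$), compute the forced jet at $x_1$ from the matching formulas (noting in particular $h_0(x_1)=1$ and $h_0'(x_1)=1/f'(x_0)>0$), and then invoke Borel's theorem to produce a smooth $h_0\colon[x_0,x_1]\to\R$ realising both jets, using smooth cut-offs together with a positive interpolant to guarantee $h_0'>0$ throughout. Extending as above yields an $h\in\Diffeo^+(\R)$ conjugating $f$ to $T$. In the remaining case $f(x)<x$, applying the argument to $f^{-1}$ delivers a conjugacy of $f$ to $x\mapsto x-1$. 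The main obstacle is precisely the smooth matching across the junctions: continuity and the functional equation are trivial, but forcing $h$ to be $C^\infty$ at $x_1$ (and hence at every $x_n$) is what requires the jet computation and the Borel-type construction of $h_0$.
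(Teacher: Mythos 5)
The paper does not actually prove this lemma---it is quoted as a known result of Sternberg \cite{St1957}---so there is no internal proof to compare against. Your argument is, however, a correct and essentially complete sketch of the standard proof (and of the same kind as Sternberg's): reduce to $f(x)>x$, observe that the orbit $x_n=f^n(x_0)$ tiles $\R$ so that $[x_0,x_1]$ is a fundamental domain, define $h$ on that domain and extend by $h\circ f=h+1$, and note that the only issue is $C^\infty$-matching at the junction points. You correctly identify that the matching condition at every $x_n$ reduces to a single jet condition at $x_1$ (because the transition maps between consecutive junctions are the smooth maps $f^{-1}$), that the forced derivative $h_0'(x_1)=1/f'(x_0)$ is positive so the two endpoint jets are compatible with $h_0'>0$, and that a Borel--Whitney-type construction produces an $h_0$ realising both jets with positive derivative (the one point needing a little care, which you flag). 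The resulting $h$ is surjective since $h(x_n)=n$, so it is a genuine element of $\Diffeo^+(\R)$, and the case $f(x)<x$ follows by applying the construction to $f^{-1}$, giving conjugacy to $x\mapsto x-1$; since $x\mapsto x+1$ and $x\mapsto x-1$ are not conjugate in $\Diffeo^+(\R)$, the two alternatives in the statement are exactly right. I see no gap; if you wanted to write this out in full, the only part requiring real detail is the construction of the positive interpolant for $h_0'$ with prescribed jets at both endpoints and integral $h_0(x_1)-h_0(x_0)=1$.
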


We remark that $x\mapsto x+1$ and $x\mapsto x-1$ are not conjugate in $\Diffeo^+(\R)$, but they are conjugate by the map $x\mapsto -x$ in $\Diffeo(\R)$.

\begin{lemma}[Kopell, Lemma 1(a), \cite{Ko1970}]\label{L: Kopell1}
Suppose that $f$ and $g$ are $C^2$ order preserving homeomorphisms of an interval $[a,b)$ such that $fg=gf$. If $g$ has no fixed points in $(a,b)$, but $f$ has a fixed point in $(a,b)$, then $f$ is the identity map.
\end{lemma}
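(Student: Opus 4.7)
The plan is to argue by contradiction by exploiting the $C^2$ hypothesis through a bounded distortion estimate for the iterates of $g$. Suppose $f \neq \text{id}$. After replacing $g$ by $g^{-1}$ if necessary, I assume $g(x) < x$ on $(a,b)$, so that $g^n(y) \to a$ monotonically for every $y \in (a,b)$. The homeomorphism $f$ fixes $a$, and because $f$ commutes with $g$ and fixes some point $p \in (a,b)$, the set $\text{fix}(f)$ is $g$-invariant and contains the sequence $g^n(p)$ converging to $a$. Applying the mean value theorem between consecutive points of this sequence produces points $\xi_n \to a$ with $f'(\xi_n) = 1$, so $f'(a) = 1$ by continuity.

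Next, I pick a connected component $J = (u,v)$ of $(a,b) \setminus \text{fix}(f)$. The fixed points $g^n(p)$ accumulate at $a$, and $g^{-n}(p)$ accumulate at $b$ (by the analogous argument applied to $g^{-1}$), so both $u > a$ and $v < b$, and $f$ fixes both endpoints of $J$. Replacing $f$ by $f^{-1}$ if needed, I assume $f(x) > x$ on $J$. Since $\text{fix}(f)$ is $g$-invariant, the iterates $J_n := g^n(J)$ are pairwise disjoint components of the complement of $\text{fix}(f)$, all contained in $(a, v)$, whence
\begin{equation*}
\sum_{n \geq 0} |J_n| \leq v - a < \infty.
\end{equation*}

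The $C^2$ hypothesis enters through a bounded distortion estimate for $g^n$ restricted to $J$: because $\log g'$ is Lipschitz on the compact interval $[a,v]$ with some constant $L$, telescoping gives
\begin{equation*}
\left| \log \frac{(g^n)'(\alpha)}{(g^n)'(\beta)} \right| \leq L \sum_{k=0}^{n-1} |g^k(\alpha) - g^k(\beta)| \leq L \sum_{k \geq 0} |J_k| \leq L(v - a)
\end{equation*}
for all $\alpha, \beta \in J$ and all $n \geq 0$. Thus the distortion of $g^n$ on $J$ is uniformly bounded by $C := e^{L(v-a)}$.

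To close the argument, I choose $y_0 \in J$ with $\delta := (f(y_0) - y_0)/|J| > 0$. Setting $x_n := g^n(y_0) \in J_n$, the mean value theorem applied to both $g^n(f(y_0)) - g^n(y_0)$ and $|J_n| = g^n(v) - g^n(u)$, together with the distortion bound, yields $f(x_n) - x_n \geq (\delta/C)|J_n|$. Since $f - \text{id}$ vanishes at the endpoints of $J_n$, a further application of the mean value theorem on $[g^n(u), x_n]$ produces a point $\eta_n \in J_n$ with $f'(\eta_n) \geq 1 + \delta/C$. As $\eta_n \to a$ and $f'$ is continuous at $a$ with $f'(a) = 1$, this contradicts $f'(\eta_n) \to 1$. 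The crux of the proof, and the step where $C^2$ is indispensable, is the bounded distortion estimate; the same statement fails in class $C^1$, as Denjoy-type examples illustrate.
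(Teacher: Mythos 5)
Your proof is correct and is essentially the classical argument of Kopell, which the paper cites rather than reproves: establishing $f'(a)=1$ from the fixed points $g^n(p)\to a$, and then deriving a contradiction from the bounded-distortion estimate for $(g^n)'$ on a wandering component of the complement of $\textup{fix}(f)$, is exactly the standard route. The only point worth flagging is that your appeal to the Lipschitz continuity of $\log g'$ on $[a,v]$ tacitly uses $g'>0$, i.e.\ that $g$ is a $C^2$ \emph{diffeomorphism} rather than merely a $C^2$ order preserving homeomorphism; this is the hypothesis in Kopell's original statement and holds in every application the paper makes of the lemma.
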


For $f$ in $\Diffeo(\R)$, 
we use the notation $T_af$ to denote the \emph{truncated} Taylor series
\[
T_af = \sum_{n=1}^\infty \frac{f^{(n)}(a)}{n!} X^n,\
\]
regarded as a formal power series in the indeterminate $X$.

Let $\mathcal P$ denote the group of formally invertible formal
power series having real coefficients, 
under the operation of formal composition. The 
identity of $\mathcal P$ is the series
\[
X = X+0X^2+0X^3+\dotsb.
\]
The formal inverse of a power series $P$ is denoted $P^{-1}$.
\begin{lemma}[Kopell, Lemma 1(b), \cite{Ko1970}]\label{L: Kopell2}
Let $f$ and $g$ be two elements of $\Diffeo^+(\R)$ that both fix $0$ and commute. If $T_0f=X$ and $0$ is not an interior point of $\textup{fix}(f)$, then $T_0g=X$ also.
\end{lemma}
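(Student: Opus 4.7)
The plan is to suppose $T_0 g\ne X$ and derive a contradiction by iterating $g$ to push a witness of nontrivial $f$-motion towards $0$ and then comparing two decay rates. Because $0$ is not interior to $\textup{fix}(f)$, pick $x_0$ arbitrarily close to $0$ with $r:=f(x_0)-x_0\ne 0$; by left/right symmetry assume $x_0>0$. Commutativity gives $f\circ g^{\pm n} = g^{\pm n}\circ f$, and the mean value theorem then yields, with $y_n := g^{\pm n}(x_0)$,
\begin{equation*}
f(y_n)-y_n \;=\; g^{\pm n}(f(x_0)) - g^{\pm n}(x_0) \;=\; (g^{\pm n})'(\xi_n)\,r
\end{equation*}
for some $\xi_n$ in the fixed compact interval $I$ spanned by $x_0$ and $f(x_0)$. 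The sign of the iterate is chosen so that $y_n\to 0$, after which the flatness $T_0 f=X$ provides the universal upper bound $|f(y_n)-y_n| = o(y_n^N)$ for every $N\in\N$. The task is to bound $(g^{\pm n})'(\xi_n)$ from below strongly enough to violate this.

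\emph{Hyperbolic case: $g'(0)=\mu\ne 1$.} Replacing $g$ by $g^{-1}$ if necessary, assume $0<\mu<1$. Continuity of $g'$ at $0$ together with the geometric decay $g^j(\xi_n)\to 0$ uniformly for $\xi_n\in I$ gives
\begin{equation*}
\log(g^n)'(\xi_n) = \sum_{j=0}^{n-1}\log g'(g^j(\xi_n)) = n\log\mu + O(1),
\end{equation*}
and a parallel estimate yields $y_n\asymp \mu^n$. Hence $|f(y_n)-y_n|\asymp \mu^n$, whereas flatness at $N=2$ forces $|f(y_n)-y_n| = o(y_n^2) = o(\mu^{2n})$. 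Dividing by $\mu^n$, the former gives a positive lower bound while the latter gives $o(\mu^n)\to 0$; contradiction.

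\emph{Parabolic case: $g'(0)=1$ but $T_0g\ne X$.} Write $g(x) = x + cx^k + o(x^k)$ with $k\ge 2$ minimal and $c\ne 0$; after passing to $g^{-1}$ take $c>0$, so that $g$ has no fixed points in some $(0,\delta)$ and $y_n := g^{-n}(x_0)$ decreases monotonically to $0$. The telescoping $y_{n+1}^{-(k-1)} - y_n^{-(k-1)}\to c(k-1)$ gives the standard parabolic asymptotic $y_n\sim (c(k-1)n)^{-1/(k-1)}$. Plugging $(g^{-j}(\xi_n))^{k-1}\sim 1/(c(k-1)j)$ into $(g^{-1})'(z) = 1 - kcz^{k-1}+o(z^{k-1})$ and summing the cocycle
\begin{equation*}
\log(g^{-n})'(\xi_n) = \sum_{j=0}^{n-1}\log(g^{-1})'(g^{-j}(\xi_n))
\end{equation*}
gives $-\tfrac{k}{k-1}\log n + O(1)$, so $(g^{-n})'(\xi_n)\asymp n^{-k/(k-1)}$ and hence $|f(y_n)-y_n|\asymp n^{-k/(k-1)}$. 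However, flatness gives $|f(y_n)-y_n| = o(y_n^N) = o(n^{-N/(k-1)})$ for every $N\in\N$; taking $N>k$ makes $(N-k)/(k-1)$ positive, so the ratio $|f(y_n)-y_n|/n^{-N/(k-1)}\asymp n^{(N-k)/(k-1)}$ tends to $\infty$ rather than to $0$, a contradiction.

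The hard step is the parabolic case: the cocycle $(g^{-n})'(\xi_n)$ decays only polynomially in $n$, so only the \emph{full} infinite-order flatness of $f$, not any fixed Taylor truncation, can overwhelm it. The technical core is the uniform parabolic rate $g^{-j}(\xi_n)\sim (c(k-1)j)^{-1/(k-1)}$ on the witness interval, combined with the asymptotic summation of the log-derivative cocycle; given both, the contradiction reduces to a direct comparison of the exponents $k/(k-1)$ and $N/(k-1)$.
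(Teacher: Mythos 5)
The paper offers no proof of this lemma: it is quoted verbatim as background from Kopell \cite{Ko1970}, so there is nothing internal to compare against. Your argument is a correct, self-contained proof, and it is in spirit the classical distortion/cocycle argument behind Kopell-type rigidity: transport a witness $x_0$ of nontrivial $f$-motion toward $0$ by iterates of $g^{\pm 1}$, use commutativity and the mean value theorem to write $f(y_n)-y_n=(g^{\pm n})'(\xi_n)\,r$ with $\xi_n$ in a fixed compact interval, and play the at-worst-polynomial decay of the derivative cocycle against the infinite-order flatness of $f$ at $0$. The case split is exhaustive, the parabolic asymptotic $y_n\sim(c(k-1)n)^{-1/(k-1)}$ is right, and uniformity over $\xi_n$ follows from monotonicity of $g^{\pm j}$. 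Two small points deserve attention. First, the reduction ``by left/right symmetry assume $x_0>0$'' should be stated as a conjugation by $x\mapsto -x$ (which preserves all hypotheses), since the non-fixed points of $f$ accumulating at $0$ may all lie on one side. Second, the claimed error term in $\log(g^{-n})'(\xi_n)=-\tfrac{k}{k-1}\log n+O(1)$ requires a summable rate for the corrections $\delta_j$ in $(g^{-j}(\xi_n))^{k-1}=\frac{1+\delta_j}{c(k-1)j}$; the telescoping argument as written only yields $\delta_j=o(1)$ and hence $-\tfrac{k}{k-1}\log n+o(\log n)$. This weaker estimate still suffices: you only need $(g^{-n})'(\xi_n)=n^{-k/(k-1)+o(1)}$ to contradict $o\bigl(n^{-N/(k-1)}\bigr)$ with $N>k$, since the fixed positive exponent gap $(N-k)/(k-1)$ absorbs the $o(1)$. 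It is also worth observing that only the parabolic case genuinely uses the full hypothesis $T_0f=X$; in the hyperbolic case the vanishing of the second-order term of $f-\mathrm{id}$ already produces the contradiction.
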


Our final lemma is about reversibility of formal power series. We are unable to find a precise reference for this lemma, so we provide a brief proof that relies on results from \cite{Lu1994,OF2008}.

\begin{lemma}\label{lemma-formal-reversible}
If a non-identity element $S=X+\dotsb$ in $\mathcal P$  is reversed by another element $T=\lambda X+\dotsb$, where $\lambda <0$, then $T$ is an involution.
\end{lemma}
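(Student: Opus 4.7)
The plan is to first pin down the leading coefficient $\lambda$ of $T$ via a direct leading-term computation, and then exploit the fact that $T^2$ commutes with $S$ together with the known structure of the centralizer of a non-identity tangent-to-identity series in $\mathcal{P}$.

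Write $S = X + aX^k + O(X^{k+1})$ with $a \neq 0$ and $k \geq 2$. Expanding $gSg^{-1}$ for an arbitrary $g \in \mathcal{P}$ with linear part $c$ and keeping only the first non-trivial correction yields
\[
gSg^{-1} = X + ac^{1-k}X^k + O(X^{k+1}).
\]
A similar calculation gives $S^{-1} = X - aX^k + O(X^{k+1})$. Applying the first formula with $g = T$ (so $c = \lambda$) to the reversing relation $TST^{-1} = S^{-1}$ gives $\lambda^{1-k} = -1$. Because $\lambda$ is a negative real number this forces $k$ to be even and $\lambda = -1$. In particular $T^2 = X + O(X^2)$ is tangent to the identity.

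Squaring the reversing relation produces
\[
T^2 S T^{-2} = T\bigl(TST^{-1}\bigr)T^{-1} = TS^{-1}T^{-1} = \bigl(TST^{-1}\bigr)^{-1} = S,
\]
so $T^2$ lies in the centralizer of $S$ in $\mathcal{P}$. The crucial tool now is the Ecalle--Lubin structure theorem for such centralizers, which is the content drawn from \cite{Lu1994,OF2008}: the centralizer of the non-identity tangent-to-identity series $S$ inside the group of tangent-to-identity elements of $\mathcal{P}$ is a one-parameter subgroup, parametrised by an injective homomorphism $t \mapsto \sigma^t$ from $(\R,+)$ to $\mathcal{P}$ with $\sigma^1 = S$.

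Conjugation by $T$ preserves linear parts, hence preserves tangency to the identity, and it carries the centralizer of $S$ into the centralizer of $TST^{-1} = S^{-1}$, which is the same set. It therefore restricts to a group automorphism of the one-parameter subgroup, of the form $\sigma^t \mapsto \sigma^{ct}$ for some nonzero $c$; evaluating at $\sigma^1 = S$ and comparing with $TST^{-1} = S^{-1} = \sigma^{-1}$ gives $c = -1$. Write $T^2 = \sigma^{t_0}$ for the unique $t_0 \in \R$ provided by the centralizer theorem. Then
\[
\sigma^{t_0} = T^2 = T\cdot T^2\cdot T^{-1} = T\sigma^{t_0}T^{-1} = \sigma^{-t_0},
\]
so injectivity of $t \mapsto \sigma^t$ forces $t_0 = 0$, whence $T^2 = X$ and $T$ is an involution. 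The main obstacle is the invocation of the centralizer structure theorem; once that is in hand the rest is leading-order bookkeeping and an automorphism-of-$(\R,+)$ argument.
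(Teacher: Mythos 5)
Your argument reaches the right conclusion by a genuinely different route. Where the paper cites \cite{OF2008} (Theorem 4 and Corollary 6) to get that the tangency order $p$ of $S$ is even and \cite[Proposition 1.5]{Lu1994} to get $\lambda^2=1$, you obtain both facts in one stroke from the elementary conjugation formula $gSg^{-1}=X+ac^{1-k}X^k+\dotsb$, which is a nice self-contained improvement. For the second half you diverge more substantially: the paper applies \cite[Lemma 3(ii)]{OF2008} to conclude that $T^2$, if not the identity, has \emph{odd} tangency order, and then contradicts \cite[Corollary 5.3.2(a)]{Lu1994} (commuting tangent-to-identity series have equal tangency orders); you instead invoke the full centralizer structure theorem (the centralizer of $S$ among tangent-to-identity series is the formal flow $\{\sigma^t\}_{t\in\R}$), write $T^2=\sigma^{t_0}$, and show $t_0=0$. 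Your route needs a strictly stronger input than the paper's, but it avoids the parity bookkeeping and is arguably more conceptual.

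There is one step that, as written, is a genuine gap: the assertion that the automorphism of $\{\sigma^t\}\cong(\R,+)$ induced by conjugation by $T$ ``is of the form $\sigma^t\mapsto\sigma^{ct}$.'' An abstract automorphism of $(\R,+)$ need not be linear (Hamel-basis pathologies), and from $\alpha(1)=-1$ together with $\alpha(t_0)=t_0$ alone you cannot conclude $t_0=0$. Fortunately the fix is already in your toolkit: the flow satisfies $\sigma^t=X+ta_kX^k+\dotsb$, and your conjugation formula with $c=\lambda=-1$ and $k$ even shows that conjugation by $T$ sends $X+bX^k+\dotsb$ to $X-bX^k+\dotsb$; hence $T\sigma^{t_0}T^{-1}=\sigma^{-t_0}$ directly by comparing order-$k$ coefficients (equivalently, $T_*v=-v$ for the infinitesimal generator $v$ of the flow). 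Then $\sigma^{t_0}=TT^2T^{-1}=\sigma^{-t_0}$ forces $t_0=0$ as you intend. With that one sentence added, the proof is complete.
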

\begin{proof}
Let $S=X+a_{p}X^{p}+\dotsb$, where $a_{p}\neq 0$. By \cite[Theorem 4 and Corollary 6]{OF2008}, $p$ is \emph{even}. Since $T^2$ commutes with $S$ we deduce from \cite[Proposition 1.5]{Lu1994} that the $X$ coefficient of $T^2$, namely $\lambda^2$, equals $1$. Hence $\lambda = -1$. Next, we can apply \cite[Lemma 3 (ii)]{OF2008} to deduce that either $T$ is an involution (in which case the lemma is proved), or else there is an \emph{odd} integer $q$ and  non-zero real number $b_q$ such that  $T^2=X+b_qX^q+\dotsb$. However, Lubin shows in \cite[Corollary 5.3.2(a)]{Lu1994} that two non-identity commuting power series $X+u_mX^m+\dotsb$, where $u_m\neq 0$, and $X+v_nX^n+\dotsb$, where $v_n\neq 0$, must satisfy $m=n$. We reach a contradiction because $p$ is even and $q$ is odd. Therefore $T$ is an involution.
\end{proof}

\section{Reversibility of order preserving diffeomorphisms}\label{S: +}

\subsection{Reversible maps}\label{SS: +reversible}\quad


Elementary dynamical considerations tell us that a  reversible element in the group of order preserving homeomorphisms of $\R$ must have infinitely many fixed points. In fact, a reversing conjugation  can only achieve its purpose by shunting the components of the complement of the fixed point set.  This was first pointed out by Calica \cite{Ca1971}.

We use the following lemma about homeomorphisms.

\begin{lemma}\label{L: useful1}
Suppose that $f$ and $h$ are order preserving homeomorphisms of $\mathbb{R}$ such that $hfh^{-1}=f^{-1}$. Then each fixed point of $h$ is also a fixed point of $f$.
\end{lemma}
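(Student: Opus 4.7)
The plan is to argue by contradiction from the conjugation relation, using the order preserving assumption to derive an incompatible inequality.

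Let $p$ be a fixed point of $h$, and suppose for contradiction that $f(p) \neq p$. First I would rewrite the hypothesis $hfh^{-1}=f^{-1}$ as the intertwining identity $hf=f^{-1}h$, and evaluate both sides at the point $p$. Since $h(p)=p$, this gives the key equality
\[
h(f(p)) \;=\; f^{-1}(h(p)) \;=\; f^{-1}(p).
\]

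Next I would split into the two cases $f(p)>p$ and $f(p)<p$. In the first case, applying $f^{-1}$ (which is order preserving) to $f(p)>p$ yields $p>f^{-1}(p)$, so the right-hand side of the displayed identity satisfies $f^{-1}(p)<p$. On the other hand, since $h$ is order preserving and fixes $p$, the inequality $f(p)>p$ gives $h(f(p))>h(p)=p$, contradicting the displayed identity. The case $f(p)<p$ is handled symmetrically, with all inequalities reversed. Either way we reach a contradiction, so $f(p)=p$.

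The argument is very short and has no real obstacle, because the conjugation relation directly pins down the value of $h$ at $f(p)$, and this value is forced to lie on the wrong side of $p$ relative to what the order preserving property of $h$ demands. The only thing to be careful about is applying the relation at the right point and using $h(p)=p$ to simplify; no appeal to the lemmas of Sternberg, Kopell, or Lubin is needed.
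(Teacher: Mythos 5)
Your proof is correct and takes essentially the same route as the paper: evaluate the intertwining relation $hf=f^{-1}h$ at the fixed point $p$ of $h$ and let order preservation do the rest. The paper packages the final step as ``order preserving homeomorphisms have no periodic points other than fixed points'' (noting that $h$ swaps $f(p)$ and $f^{-1}(p)$, so $f^2(p)=p$, so $f(p)=p$), while you unpack the same fact as an explicit two-case inequality argument; the difference is purely cosmetic.
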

\begin{proof}
Suppose that $h$ fixes the point $p$. We have two equivalent equations $hfh^{-1}=f^{-1}$ and $hf^{-1}h^{-1}=f$. From these equations we obtain $hf(p)=f^{-1}(p)$ and $hf^{-1}(p)=f(p)$. Order preserving homeomorphisms such as $h$ have no periodic points other than fixed points; thus $f(p)=f^{-1}(p)$.  Therefore $f^2(p)=p$, which means that $f(p)=p$.
\end{proof}

The next lemma works for diffeomorphisms, but not for homeomorphisms. 

\begin{lemma}\label{L: useful2}
Suppose that $f$ and $h$ are order preserving diffeomorphisms of $\mathbb{R}$ such that $hfh^{-1}=f^{-1}$. If $h$ has a fixed point then $f$ is the identity map.
\end{lemma}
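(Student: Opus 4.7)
The plan is to argue by contradiction: I assume that $h$ fixes some point $p$ but that $f$ is not the identity, and derive a contradiction by showing $\text{fix}(f) = \R$. By Lemma \ref{L: useful1}, every fixed point of $h$ is a fixed point of $f$; in particular $\text{fix}(h) \subseteq \text{fix}(f)$. Conjugating $hfh^{-1} = f^{-1}$ once more by $h$ yields $h^2 f h^{-2} = f$, so $h^2$ commutes with $f$; since $h$ is order preserving and hence has no periodic points other than fixed points, we also have $\text{fix}(h^2) = \text{fix}(h)$.

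Before the main step, I record a sign-swap property: if $(a,b)$ is a connected component of $\R \setminus \text{fix}(f)$ on which $f(x) > x$, then for any $y \in h((a,b))$, writing $y = h(x)$ with $x \in (a,b)$, the identity $hf = f^{-1}h$ together with order preservation gives $f^{-1}(y) = h(f(x)) > h(x) = y$, whence $f(y) < y$. Thus $h$ maps each component of $\R \setminus \text{fix}(f)$ to a different component on which $f-\text{id}$ has the opposite sign, and in particular cannot fix any component setwise.

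The crux is an application of Lemma \ref{L: Kopell1} to each connected component $I$ of $\R \setminus \text{fix}(h)$. Such an $I$ is either a bounded interval $(q_1, q_2)$ with $q_1, q_2$ consecutive in $\text{fix}(h)$, or a half-infinite interval when $\text{fix}(h)$ is bounded on one side. In either case $f$ and $h^2$ fix every finite endpoint of $I$, so both restrict to order preserving homeomorphisms of the closure $\bar I$, and $\text{fix}(h^2) = \text{fix}(h)$ is disjoint from $I$. Lemma \ref{L: Kopell1} applied with $g = h^2$ then gives a dichotomy: either $f$ has a fixed point in $I$, in which case $f$ is the identity on $\bar I$; or $f$ has no fixed point in $I$, in which case $I$ is itself a component of $\R \setminus \text{fix}(f)$. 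In the latter subcase $h$ fixes the finite endpoints of $I$ and preserves the direction at infinity, so $h(I) = I$, contradicting the sign swap. The first subcase must therefore hold, so $f$ is the identity on $\bar I$.

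Finally, $\R$ is the union of $\text{fix}(h)$ together with the closures of the components of $\R \setminus \text{fix}(h)$, and $f$ is the identity on every piece of this cover. Hence $f$ is the identity on $\R$, contradicting our assumption. The main obstacle is the dichotomous application of Lemma \ref{L: Kopell1} inside every gap of $\text{fix}(h)$: one must verify that $f$ and $h^2$ both preserve the gap's closure, and the sign-swap property is precisely what is needed to rule out the alternative in which Kopell's lemma gives no direct information about $f$.
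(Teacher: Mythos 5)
Your proof is correct and follows essentially the same route as the paper: decompose $\R$ along $\operatorname{fix}(h)$, use the reversing relation together with order preservation to force $f$ to have a fixed point in each gap of $\operatorname{fix}(h)$ (your ``sign-swap'' observation is the same inequality the paper writes as $x>f^{-1}(x)=hf(h^{-1}(x))>x$, just packaged as a statement about components of $\R\setminus\operatorname{fix}(f)$), and then apply Kopell's lemma to $f$ and $h^2$ on that gap. The only cosmetic difference is that the paper treats the case $h=\mathrm{id}$ separately and notes that the half-infinite gap with only a right-hand finite endpoint needs the reflected form of Kopell's lemma, a point you should also acknowledge.
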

\begin{proof} 
If $h$ is the identity map then $f$ is an order preserving involution, and therefore $f$  is also the identity map. Suppose then that $h$ is not the identity map, but that it nevertheless has a fixed point. Choose a component $(a,b)$ in the complement of $\text{fix}(h)$. One of $a$ or $b$ is a real number (that is, we cannot have both $a=-\infty$ and $b=+\infty$). Let us assume that $a$ is a real number; the other case can be dealt with similarly.  By Lemma~\ref{L: useful1}, $f$ fixes $(a,b)$ as a set. The map $f$ cannot be free of fixed points on $(a,b)$. To see this, suppose, by switching $f$ and $f^{-1}$ if necessary, that $f(x)>x$ for each real number $x$ in $(a,b)$. Then
\[
x>f^{-1}(x)=hf(h^{-1}(x))>hh^{-1}(x)=x,
\] 
which is a contradiction. Since $f$ has a fixed point in $(a,b)$,
Lemma~\ref{L: Kopell1} applied to the maps $f$ and $h^2$ shows that $f$ coincides with the identity map on $(a,b)$. We already know that $f$ coincides with the identity map on $\text{fix}(h)$; thus $f$ is the identity map.
\end{proof}

We are now in a position to prove the first main result.

\begin{proof}[Proof of Theorem~\ref{theorem-reversible-diffeo+}]
Let $t$ be the map given by $t(x)=x+1$ for each $x$. Then \eqref{E: wave} states that $tft^{-1}=f^{-1}$. Thus a diffeomorphism $f$ that satisfies \eqref{E: wave} is reversible, and likewise all conjugates of $f$ are reversible.

Conversely, suppose that $g$ and $h$ are elements of $\Diffeo^+(\R)$ such that $hgh^{-1}=g^{-1}$. The theorem holds when $g$ is the identity, so let us suppose that $g$ is not the identity. This means that we can assume, by Lemma~\ref{L: useful2}, that $h$ is free of fixed points. Observe that  $h^{-1}gh=g^{-1}$; hence by replacing $h$ with $h^{-1}$ if necessary we can assume that $h(x)>x$ for each $x$. 

By Lemma~\ref{L: fixedPointFree} we see that there is an element $k$ in $\Diffeo^+(\R)$ such that $khk^{-1}=t$. Define $f=kgk^{-1}$. Then $tft^{-1}=f^{-1}$; that is, \eqref{E: wave} holds.  Now $f$, like $g$, must have a fixed point, and by conjugating $f$ by a translation we may assume that this fixed point is $0$. Since translations commute, this conjugation does not affect \eqref{E: wave}. Finally, from the equation $tf^nt^{-1}=f^{-n}$ we deduce that $f$ fixes each integer.
\end{proof}

We can construct a diffeomorphism $f$ that satisfies \eqref{E: wave} explicitly by defining $f$ on $[0,1]$ to be an arbitrary order preserving diffeomorphism of $[0,1]$ such that $T_0f=(T_1f)^{-1}$, and then extending the domain of $f$ to $\mathbb{R}$ using \eqref{E: wave}. More precisely, we have the following corollary of Theorem~\ref{theorem-reversible-diffeo+}.

\begin{corollary} Let $g\in\Diffeo^+(\R)$. Then the following two 
conditions are equivalent:
\begin{enumerate}
\item The map $g$ is reversible in $\Diffeo^+(\R)$.
\item There exist
\begin{enumerate}
\item a formally invertible power series $P$;
\item an order preserving  diffeomorphism
$\phi$ of $[0,1]$, with 
\[
 T_0\phi=P,\qquad T_1\phi=P^{-1} ;
\]
and
\item $h\in\Diffeo^+(\R)$,
such that $g= hfh^{-1}$, where, for each $n$ in $\mathbb{Z}$,
\[
f(x) =
\begin{cases}
\phi(x-2n)+2n& \text{if   \,$2n\le x<2n+1$,}\\
\phi^{-1}(x-2n-1)+2n+1& \text{if   \,$2n+1\le x<2n+2$.}
\end{cases}
\]
\end{enumerate}
\end{enumerate}
\end{corollary}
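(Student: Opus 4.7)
The plan is to read the corollary as an explicit parameterization of the maps $f$ produced by Theorem~\ref{theorem-reversible-diffeo+}. That theorem already reduces reversibility of $g$ to conjugacy to some $f \in \Diffeo^+(\R)$ that fixes every integer and satisfies the wave equation~\eqref{E: wave}. What the corollary adds is that such an $f$ is completely determined by its restriction $\phi$ to $[0,1]$, and that the only compatibility condition required is $T_1\phi = (T_0\phi)^{-1}$. Accordingly, I would prove (ii)$\Rightarrow$(i) by checking that the piecewise $f$ built in (c) lies in $\Diffeo^+(\R)$ and obeys \eqref{E: wave}, and prove (i)$\Rightarrow$(ii) by extracting $\phi$ from the conjugate of $g$ supplied by Theorem~\ref{theorem-reversible-diffeo+} and recovering the piecewise formula from the wave equation.

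For (ii)$\Rightarrow$(i), since $\phi$ is an order preserving diffeomorphism of $[0,1]$ it must fix $0$ and $1$, and so the piecewise formula yields $f(2n) = 2n$ and $f(2n+1) = 2n+1$, whence $f$ is continuous. Smoothness at $x = 2n+1$ amounts to the equality $\phi^{(k)}(1) = (\phi^{-1})^{(k)}(0)$ for every $k \geq 1$; this holds because $T_0(\phi^{-1}) = (T_0\phi)^{-1} = P^{-1} = T_1\phi$. The check at $x = 2n$ is symmetric and uses $T_1(\phi^{-1}) = (T_1\phi)^{-1} = P = T_0\phi$. A direct substitution into the piecewise definition then verifies $f(x+1) = f^{-1}(x) + 1$ on each subinterval. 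Theorem~\ref{theorem-reversible-diffeo+} yields reversibility of $f$, and hence of $g = hfh^{-1}$.

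For (i)$\Rightarrow$(ii), Theorem~\ref{theorem-reversible-diffeo+} produces $h, F \in \Diffeo^+(\R)$ with $g = hFh^{-1}$, $F$ fixing each integer, and $F(x+1) = F^{-1}(x) + 1$. I would set $\phi = F|_{[0,1]}$, which is a diffeomorphism of $[0,1]$ because $F$ fixes $0$ and $1$, and then $P = T_0\phi$. The condition $T_1\phi = P^{-1}$ is extracted by Taylor expanding the wave equation at $x = 0$: the left-hand side contributes $T_1F = T_1\phi$, while the right-hand side contributes $T_0(F^{-1}) = (T_0F)^{-1} = P^{-1}$. Finally, the wave equation recovers $F$ on every integer interval from $\phi$ by induction: on $[1,2]$ it forces $F(x) = F^{-1}(x-1) + 1 = \phi^{-1}(x-1) + 1$, on $[2,3]$ then $F(x) = \phi(x-2) + 2$, and so on in both directions, matching the piecewise formula of (c). The main thing to verify carefully is this last bookkeeping step; the smoothness checks at the integer breakpoints in the forward direction are entirely formal, reducing to the identity $T_0(\phi^{-1}) = (T_0\phi)^{-1}$ in $\mathcal{P}$.
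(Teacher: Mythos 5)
Your proposal is correct and takes essentially the same approach the paper intends: the corollary is stated there without a written proof, the preceding paragraph merely noting that one defines $f$ on $[0,1]$ with $T_0f=(T_1f)^{-1}$ and extends by \eqref{E: wave}, which is exactly the construction you carry out. The details you supply --- the smoothness checks at the integers via $T_1\phi = T_0(\phi^{-1}) = (T_0\phi)^{-1}$ and $T_1(\phi^{-1}) = (T_1\phi)^{-1} = P$, and the induction recovering the piecewise formula from the wave equation --- are precisely the right ones.
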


\begin{remark}
Each map $f$ of part (ii) commutes with $x\mapsto x+2$. Hence $f$ is the lift under the covering map $x\mapsto \exp(\pi i x)$ of the order preserving diffeomorphism  of the unit circle $\bar{f}$ given by $\bar{f}(e^{i\pi\theta})=e^{i\pi f(\theta)}$. Moreover $f$ is reversed by $x\mapsto x+1$; thus $\bar{f}$ is reversed by rotation by $\pi$.
\end{remark}

\subsection{Composites of reversible maps}\label{SS: +compositesReversible}\quad

\begin{lemma}\label{L: +reversibleComposite}
Each fixed point free element of $\Diffeo^+(\R)$ can be expressed as a composite of two reversible elements of $\Diffeo^+(\R)$.
\end{lemma}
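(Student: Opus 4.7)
My plan has two main steps: a reduction and a construction.

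\emph{Reduction.} By Sternberg's Lemma~\ref{L: fixedPointFree}, every fixed-point-free element of $\Diffeo^+(\R)$ is conjugate in $\Diffeo^+(\R)$ to $t\colon x\mapsto x+1$ or to $t^{-1}$. The class of elements expressible as a composite of two reversibles is invariant under both conjugation (conjugates of reversibles are reversible) and inversion (since $g$ is reversible iff $g^{-1}$ is, and $(r_1r_2)^{-1}=r_2^{-1}r_1^{-1}$). So it is enough to write $t=f_1f_2$ with $f_1,f_2$ reversible in $\Diffeo^+(\R)$.

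\emph{Construction.} I would take $f_1$ to be a reversible diffeomorphism whose fixed-point set is $2\Z$ and which is reversed by translation by $2$, so that $f_1(x+2)=f_1^{-1}(x)+2$. Choose the profile of $f_1$ on the fundamental domain $[0,2]$ so that the displacement $f_1(x)-x$ exceeds $1$ at some interior point; then the equation $f_1(x)=x+1$ has two transverse solutions $p_0<p_0'$ in $(0,2)$, and extension by the wave relation produces the discrete bi-infinite zero set $B=\{p_0+4n,\,p_0'+4n:n\in\Z\}$. Setting $f_2:=f_1^{-1}t$ one gets $\textup{fix}(f_2)=B$, and on the intervals between consecutive fixed points $f_2$ alternates between moving downward (on $(p_0+4n,p_0'+4n)$) and upward (on $(p_0'+4n,p_0+4(n+1))$); this is exactly the alternation one sees in a wave-equation reversible map.

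\emph{Main obstacle.} The hard part is verifying that $f_2$ is itself reversible. By Theorem~\ref{theorem-reversible-diffeo+} this amounts to producing a conjugator $k\in\Diffeo^+(\R)$ for which $G:=kf_2k^{-1}$ satisfies $G(x+1)=G^{-1}(x)+1$. A piecewise-linear $k$ with $k(p_0+4n)=2n$ and $k(p_0'+4n)=2n+1$ carries $f_2$ to a diffeomorphism fixing every integer with the correct alternating dynamics, but the wave equation additionally forces the Taylor expansions of $G$ at $0$ and $1$ to be formal inverses (the compatibility condition of the corollary to Theorem~\ref{theorem-reversible-diffeo+}). At first order this is the constraint $f_1'(p_0)\cdot f_1'(p_0')=1$, together with analogous higher-order Taylor relations between $T_{p_0}f_1$ and $T_{p_0'}f_1$. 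The technical core of the proof is a symmetric design of the profile of $f_1$ on $[0,2]$ which realises these relations and thereby yields an explicit reverser for $f_2$ (equivalently, an explicit conjugation to the wave-equation form of the corollary), completing the decomposition $t=f_1f_2$.
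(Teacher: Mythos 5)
Your reduction to writing a single fixed point free map as a composite of two reversibles is sound and matches the paper's first step. The genuine gap is in the construction: you fix one reversible factor $f_1$ and are then forced to take $f_2=f_1^{-1}t$, so you must prove that this completely determined map is reversible --- and that is exactly the step you leave open. The route you sketch (design the profile of $f_1$ so that the Taylor expansions of the conjugated map at $0$ and $1$ are formal inverses) is necessary but far from sufficient. For $kf_2k^{-1}$ to satisfy the wave equation $G(x+1)=G^{-1}(x)+1$, the restriction of $f_2$ to one component of the complement of $\textup{fix}(f_2)$ must be \emph{smoothly conjugate}, with boundary-compatible conjugating maps, to the inverse of its restriction to the adjacent component. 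Smooth conjugacy of diffeomorphisms of a closed interval with no interior fixed points is not determined by Taylor data at the endpoints (there are additional functional moduli), so matching the jets of $f_1$ at $p_0$ and $p_0'$ does not produce a reverser for $f_2$. Since $f_2$ is pinned down by $f_1$ and $t$, you have no remaining freedom with which to impose the required conjugacy, and the ``symmetric design'' you appeal to is precisely the missing content.

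The paper sidesteps this by not insisting that the fixed point free product be $t$ itself. It builds \emph{both} factors to be reversible by construction: $f$ satisfies $f(x+1)=f^{-1}(x)+1$, while $g$ is prescribed arbitrarily on half of a fundamental domain (subject to flat Taylor conditions at the gluing points and to $g<f$ there) and is defined on the other half by the reversing relation $g=k^{-1}g^{-1}k$, then extended periodically. The only global condition arranged is the open inequality $g(x)<f(x)$ for all $x$, which makes $g^{-1}f$ fixed point free; by Lemma~\ref{L: fixedPointFree} and closure of the class under inversion, one such example suffices. To salvage your approach, reverse the logic: first manufacture the second reversible factor by the extension-by-the-reversing-relation device, and only afterwards observe that the product with a suitable reversible map is fixed point free --- which is the paper's proof.
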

\begin{proof}
By Lemma~\ref{L: fixedPointFree} it suffices to find a single fixed point free map that can be expressed as a composite of two reversible diffeomorphisms. Let $f$ be a reversible  order preserving diffeomorphism such that $f(x+1)=f^{-1}(x)+1$ for each real number $x$, and $f(y)>y$ for each element $y$ of $(0,1)$. The graph of such a map $f$ is shown in Figure~\ref{F: reversibles}.

\begin{figure}[ht]
\centering
\includegraphics[scale=1.0]{./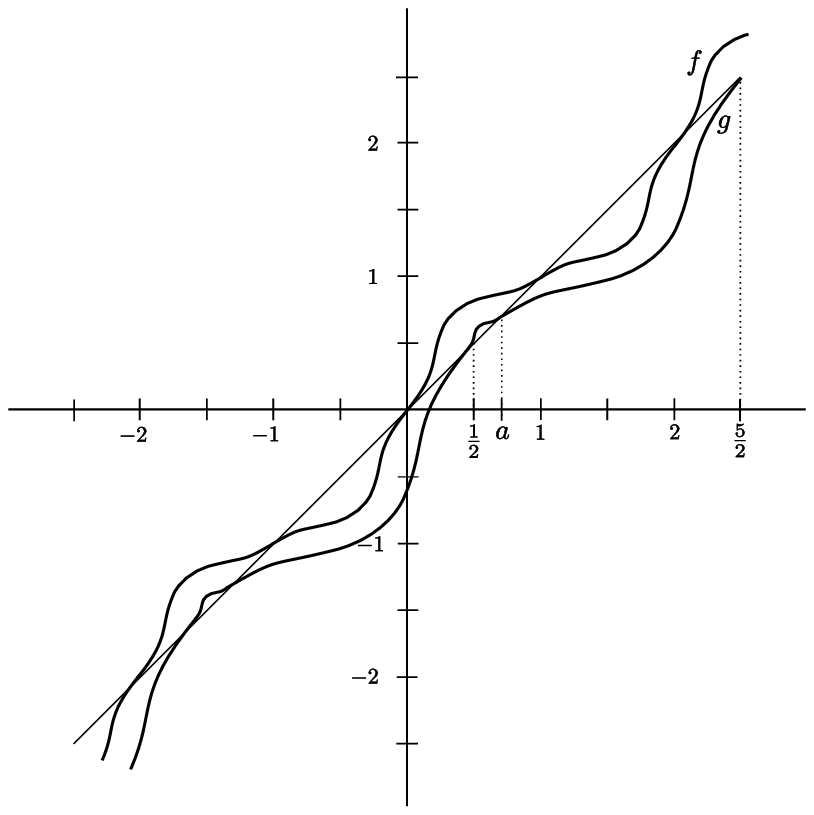}
\caption{}
\label{F: reversibles}
\end{figure}

Let $a$ be an element from the interval $\left(\frac12,f\left(\frac12\right)\right)$. Notice that every order preserving diffeomorphism $h$ of $\left[\frac12,a\right]$ satisfies $h(x)<f(x)$ for $x\in\left[\frac12,a\right]$. Choose an order preserving diffeomorphism $g$ of $\left[a,\frac52\right]$ such that $T_a g = T_{\frac52}g = X$, and such that $g(x)<f(x)$ for each $x\in \left[a,\frac52\right]$.  (This construction is possible by a classic result of Borel, which says that to each formal power series $P$ there corresponds a smooth function $f$ defined in a neighbourhood of $0$ such that $T_0f=P$.) Next, choose an order preserving diffeomorphism $k$ from $\left[\frac12,a\right]$ to $\left[a,\frac52\right]$ such that $T_{\frac12}k=T_{a}k= X$. 

We extend the definition of $g$ to $\mathbb{R}$ by defining $g(x)=k^{-1}g^{-1}k(x)$ for $x\in\left[\frac12,a\right]$, and $g(x+2)=g(x)+2$ for all $x\in\mathbb{R}$. We  extend the definition of $k$ by defining $k(x)=k^{-1}(x)+2$ for $x\in\left[a,\frac52\right]$ and $k(x+2)=k(x)+2$ for all $x\in\R$. The resulting maps $g$ and $k$ are both order preserving diffeomorphisms. Moreover, one can check that the equation $g(x)=k^{-1}g^{-1}k(x)$ is satisfied for points $x$ in $\left[\frac12,\frac52\right]$. Since both maps commute with $x\mapsto x+2$, this equation is satisfied throughout $\mathbb{R}$. Finally, we have defined $g$ such that $f(x)>g(x)$ for elements $x$ of  $\left[\frac12,\frac52\right]$, and in fact $f(x)>g(x)$ everywhere, again, because both maps commute with $x\mapsto x+2$. Therefore $g^{-1}f$ is a fixed point free diffeomorphism expressed as a composite of two reversible maps.
\end{proof}

\begin{proof}[Proof of Theorem~\ref{T: +reversibleComposite}]
Choose $f$ in $\Diffeo^+(\R)$. Choose a fixed point free diffeomorphism $g$ such that $g(x)<f(x)$ for each $x\in\mathbb{R}$. Then $g^{-1}f(x)>x$ for each $x$ in $\mathbb{R}$, so the map $h=g^{-1}f$ is also free of fixed points. Since $f=gh$, the result follows from Lemma~\ref{L: +reversibleComposite}.
\end{proof}

We do not know whether  each element of $\Diffeo^+(\R)$ is the composite 
of three reversible elements.

\section{Reversibility in the full diffeomorphisms group}\label{S: +-}

\subsection{Order reversing reversible maps }\label{SS: +-reversible}\quad

We denote the set of order reversing diffeomorphisms
of $\R$ by $\Diffeo^-(\R)$. The next proposition fails for homeomorphisms.

\begin{proposition}\label{P: -reversible}
An order reversing member of $\Diffeo(\R)$ is reversible in $\Diffeo(\R)$ if and only if it is an involution.
\end{proposition}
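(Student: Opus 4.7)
The reverse implication is immediate: if $f$ is an involution then $f=f^{-1}$, so $f$ is reversed by the identity map. The work lies in the forward direction, so assume that $f\in\Diffeo^-(\R)$ and that $hfh^{-1}=f^{-1}$ for some $h\in\Diffeo(\R)$.

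First I would observe that $f$ has a unique fixed point. Because $f$ is order reversing, the function $x\mapsto f(x)-x$ is strictly decreasing on $\R$, takes values in $\R$ tending to $+\infty$ at $-\infty$ and to $-\infty$ at $+\infty$, and therefore vanishes at exactly one point $p$. Since $h$ carries $\text{fix}(f)$ onto $\text{fix}(f^{-1})=\text{fix}(f)=\{p\}$, the reverser $h$ satisfies $h(p)=p$.

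The key trick is to reduce to the case where the reverser is order preserving. If $h$ is order reversing, replace $h$ by $hf$; this new map is order preserving (composition of two order reversing diffeomorphisms), it still reverses $f$ because
\[
(hf)\,f\,(hf)^{-1}=hf\cdot f\cdot f^{-1}h^{-1}=hfh^{-1}=f^{-1},
\]
and it still fixes $p$ because $hf(p)=h(p)=p$. So without loss of generality $h\in\Diffeo^+(\R)$ and $p\in\text{fix}(h)$.

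Now apply Lemma~\ref{L: useful2} to the order preserving diffeomorphism $f^2$ and the order preserving diffeomorphism $h$. Squaring the reversing relation gives $h f^2 h^{-1}=(hfh^{-1})^2=f^{-2}=(f^2)^{-1}$, and $h$ has a fixed point at $p$, so the lemma forces $f^2$ to be the identity. Thus $f$ is an involution. The only obstacle worth flagging is recognising the substitution $h\mapsto hf$ that allows Lemma~\ref{L: useful2} to be invoked; once that reduction is in place, the conclusion is essentially automatic.
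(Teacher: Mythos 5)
Your proof is correct and follows essentially the same route as the paper's: replace $h$ by $hf$ to obtain an order preserving reverser fixing the unique fixed point of $f$, then apply Lemma~\ref{L: useful2} to $f^2$. The extra details you supply (uniqueness of the fixed point of an order reversing diffeomorphism, and the check that $hf$ still reverses $f$ and fixes $p$) are accurate and merely make explicit what the paper leaves implicit.
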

\begin{proof}
Involutions are all reversible by the identity map. Conversely, suppose that $f\in\Diffeo^-(\R)$, $h\in\Diffeo(\R)$, and $hfh^{-1}=f^{-1}$. By replacing $h$ with $hf$ if necessary, we may assume that $h$ preserves order. From the equation $hf=f^{-1}h$ we deduce that $h$ fixes the unique fixed point of $f$. Now, $hf^2h^{-1}=f^{-2}$, and $f^2$ preserves order; therefore Lemma~\ref{L: useful2} applies to show that $f^2$ is the identity map, as required.
\end{proof}

Proposition~\ref{P: -reversible} accounts for all 
order reversing reversible diffeomorphisms. 
In Theorem~\ref{theorem-reversible-diffeo+} 
we described all order preserving diffeomorphisms that are 
reversed by order preserving maps. That leaves only 
order preserving diffeomorphisms that are reversed by 
order reversing maps. These are examined 
next. 

\subsection{Strongly reversible maps}\label{SS: +-strongReverse}\quad

\begin{lemma}\label{L: fixedFreeReverse}
Fixed point free diffeomorphisms are strongly reversible.
\end{lemma}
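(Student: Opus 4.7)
My plan is to reduce everything to a single explicit model, namely the unit translation, by means of Sternberg's normal form.

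First I would note that any fixed point free diffeomorphism $f$ of $\R$ must be order preserving. Indeed, an order reversing continuous self-map of $\R$ always has a fixed point by the intermediate value theorem, applied to $x\mapsto f(x)-x$, which changes sign because $f$ reverses the ends of $\R$. So without loss of generality $f\in\Diffeo^+(\R)$.

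Next I would apply Sternberg's Lemma~\ref{L: fixedPointFree}: there exists $k\in\Diffeo^+(\R)$ conjugating $f$ to $t_\pm(x)=x\pm 1$. Because strong reversibility is a conjugacy invariant in $\Diffeo(\R)$ (if $f=kgk^{-1}$ and $\tau$ is an involution reversing $g$, then $k\tau k^{-1}$ is an involution reversing $f$), it therefore suffices to prove that $t_+$ and $t_-$ are strongly reversible.

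For this I would exhibit the reversing involution explicitly. Let $\sigma\in\Diffeo(\R)$ be the map $\sigma(x)=-x$. Then $\sigma$ is an involution, and a direct check gives
\[
\sigma t_+ \sigma^{-1}(x) = \sigma(-x+1) = x-1 = t_+^{-1}(x),
\]
so $\sigma$ reverses $t_+$; the same calculation reverses $t_-$. Hence $t_\pm$ are strongly reversible, and pulling back through $k$ shows that $f$ is reversed by the involution $k^{-1}\sigma k\in\Diffeo^-(\R)$, completing the proof.

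There is no real obstacle here: once Sternberg's normal form is invoked the argument collapses to the one-line verification that the standard reflection conjugates $t_+$ to $t_+^{-1}$. The only minor point to flag is that the resulting involution lies in $\Diffeo^-(\R)$ rather than $\Diffeo^+(\R)$, which is consistent with the fact (recorded in the introduction) that $\Diffeo^+(\R)$ contains no non-trivial involutions.
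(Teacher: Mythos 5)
Your argument is correct and is essentially the paper's own proof: both reduce via Sternberg's normal form (Lemma~\ref{L: fixedPointFree}) to the unit translation and observe that the involution $x\mapsto -x$ reverses it. The only cosmetic difference is that you treat $x\mapsto x-1$ separately, whereas the paper absorbs it by conjugating in $\Diffeo(\R)$; nothing of substance changes.
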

\begin{proof}
A fixed point free diffeomorphism is, by Lemma~\ref{L: fixedPointFree}, conjugate in the group $\Diffeo(\R)$ to $x\mapsto x+1$, and this map is reversed by the involution $x\mapsto -x$.
\end{proof}

\begin{proposition}\label{P: +-strongReverse}
If a member of $\Diffeo^+(\R)$ is reversed by a member of $\Diffeo^-(\R)$ then it is strongly reversible.
\end{proposition}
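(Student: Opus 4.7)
The plan is to construct an involution $\tau\in\Diffeo(\R)$ reversing $f$ by splicing $h$ and $h^{-1}$ across a common fixed point. I would dispose of the trivial cases first: if $f=\textup{id}$ then $x\mapsto -x$ reverses $f$, and if $f$ is fixed point free then Lemma~\ref{L: fixedFreeReverse} applies directly. So assume $f$ has a fixed point and is not the identity, and let $p$ be the unique fixed point of the order reversing diffeomorphism $h$. The main case, which I would treat first, is $p\in\textup{fix}(f)$; after a simultaneous translation of $f$ and $h$ we may take $p=0$, so in particular $f(0)=0$.

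I would then set
\[
\tau(x)=\begin{cases} h(x), & x\le 0,\\ h^{-1}(x), & x\ge 0, \end{cases}
\]
which is a continuous order reversing bijection since $h(0)=0$. Because $h$ maps $(-\infty,0]$ onto $[0,+\infty)$ and conversely for $h^{-1}$, and $f$ fixes $0$, a short direct calculation gives $\tau^2=\textup{id}$ and, using $h^{-1}fh=f^{-1}$ (obtained from $hfh^{-1}=f^{-1}$ by inversion), $\tau f\tau=f^{-1}$. The only substantive issue is smoothness of $\tau$ at $0$, which is equivalent to $(T_0h)^2=X$ in $\mathcal P$, i.e., to $T_0h$ being an involutive formal power series. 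I would split this into two subcases. If $T_0f\neq X$, then $T_0h$ reverses the non-identity series $T_0f$ with negative linear coefficient $h'(0)$, so Lemma~\ref{lemma-formal-reversible} immediately gives the conclusion. If $T_0f=X$ while $0$ is not interior to $\textup{fix}(f)$, then since $h^2$ commutes with $f$ (a one-line consequence of $hfh^{-1}=f^{-1}$, using $hf^{-1}=fh$), Lemma~\ref{L: Kopell2} yields $T_0(h^2)=X$, which is $(T_0h)^2=X$.

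What remains is the residual subcase $T_0f=X$ with $0$ interior to $\textup{fix}(f)$, together with the parallel case $p\notin\textup{fix}(f)$. In the first of these, $f$ is the identity on some interval containing $0$, so $\tau$ may be redefined on a sub-interval $[c,h(c)]\subset\textup{fix}(f)$ without affecting the reversing relation; I would construct a smooth order reversing involution of $[c,h(c)]$ whose Taylor series at $c$ agrees with $T_ch$ by building the map on a fundamental half-interval up to its fixed point via Borel's theorem (prescribing an involutive Taylor jet at the fixed point) and then extending by reflection. In the second case, $p$ lies in a bounded component $(\alpha,\beta)$ of $\R\setminus\textup{fix}(f)$ with $h(\alpha)=\beta$ and $h(\beta)=\alpha$; an analogous Borel construction, combined with Lemma~\ref{L: fixedFreeReverse} applied to the fixed-point-free restriction $f|_{(\alpha,\beta)}$, produces a smooth order reversing involution on $[\alpha,\beta]$ that reverses $f|_{[\alpha,\beta]}$ and matches $T_\alpha h$ at $\alpha$; setting $\tau=h$ on $(-\infty,\alpha]$ and $\tau=h^{-1}$ on $[\beta,\infty)$ then completes $\tau$. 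I expect these local Borel-type constructions of smooth involutions with prescribed boundary Taylor data to be the main technical obstacle, since the endpoint-reflection constraints inherent to being an involution must be shown to be compatible with the data inherited from $h$.
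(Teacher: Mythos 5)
Your treatment of the case where the fixed point $p$ of $h$ is also a fixed point of $f$ matches the paper's proof: splice $h$ and $h^{-1}$ at $0$, obtain smoothness of the splice from Lemma~\ref{lemma-formal-reversible} when $T_0f\neq X$ and from Lemma~\ref{L: Kopell2} applied to $h^2$ when $T_0f=X$ but $0$ is not interior to $\textup{fix}(f)$, and adjust the splice inside $\textup{fix}(f)$ when $f$ is the identity near $0$. The genuine gap is in the remaining case $p\notin\textup{fix}(f)$, where $p$ lies in a bounded component $(\alpha,\beta)$ of $\R\setminus\textup{fix}(f)$ with $h(\alpha)=\beta$ and $h(\beta)=\alpha$. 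You propose to manufacture, by a Borel-type interpolation together with Lemma~\ref{L: fixedFreeReverse}, an order reversing involution $\sigma$ of $[\alpha,\beta]$ that reverses $f|_{[\alpha,\beta]}$ \emph{and} carries the prescribed infinite jets $T_\alpha h$ at $\alpha$ and $T_\beta h^{-1}$ at $\beta$. But these two requirements are not independently adjustable: the relation $\sigma f\sigma=f^{-1}$ determines $\sigma$ on all of $(\alpha,\beta)$ from its values on a single fundamental domain of the fixed-point-free map $f|_{(\alpha,\beta)}$, so the behaviour of $\sigma$ near the endpoints --- and in particular its jets there, supposing it is even smooth on the closed interval --- is an output of the construction, not an input. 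Borel's theorem lets you prescribe a jet for a freely chosen function, not for one subject to a functional equation that propagates data out to the boundary; nothing in your sketch shows that the emergent endpoint jets can be made to agree with those of $h$.

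The paper closes this case with no construction at all: $h^2$ is an order preserving diffeomorphism that commutes with $f$ and fixes the point $p\in(\alpha,\beta)$, while $f$ has no fixed points in $(\alpha,\beta)$; Kopell's Lemma~\ref{L: Kopell1} (with the fixed-point-free map in the role of $g$ and $h^2$ in the role of the map with a fixed point) forces $h^2$ to be the identity on $[\alpha,\beta)$, hence on $[\alpha,\beta]$ by continuity. Thus $h$ and $h^{-1}$ already coincide on $[\alpha,\beta]$, the spliced map $k$ is automatically a diffeomorphism, and one checks directly that it reverses $f$. You already invoke exactly this commuting-diffeomorphism rigidity (via Lemma~\ref{L: Kopell2}) in your second subcase; the missing idea is to apply the companion Lemma~\ref{L: Kopell1} to $h^2$ on $[\alpha,\beta)$ here as well, rather than attempting a fresh interpolation whose feasibility is left unproved.
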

\begin{proof}
Let $f\in\Diffeo^+(\R)$, $h\in\Diffeo^-(\R)$, and $hfh^{-1}=f^{-1}$. We wish to show that
$f$ is strongly reversible. Given Lemma~\ref{L: fixedFreeReverse}, we may assume that $f$ has a fixed point. By conjugation, we can assume that the fixed point of $h$ is $0$. Notice that $h$ permutes the fixed points of $f$. 
We define an involutive homeomorphism $k$ by
\[
k(x) = 
\begin{cases}
h(x) & \text{if $x\geq 0$}\,,\\
h^{-1}(x) & \text{ if $x<0$}\,. 
\end{cases}
\]

If $f(0)=0$ then clearly, $kfk=f^{-1}$; however, $k$ may not be a diffeomorphism.  Note that $f^\prime(0)=1$, because $f^\prime(0)=(f^{-1})^\prime(0)$.  We consider three cases.

First, suppose that $f$ coincides with the identity 
on a neighbourhood of $0$. In this case we have freedom to adjust the definition of $k$  near $0$ so that it is an involutive 
diffeomorphism, without disturbing the validity of the equation 
$kfk=f^{-1}$.

Second, suppose that $0$ is not an interior fixed point of 
$f$, but that $T_0f=X$. Since $h^2$ commutes with $f$, 
it follows from Lemma~\ref{L: Kopell2} that  $T_0h$ is an involution,
so that $k$ is already a diffeomorphism.

Third, suppose that $0$ is a fixed point of $f$
and $T_0f\not=X$.  By Lemma~\ref{lemma-formal-reversible}, $T_0h$ is an involution, and again, $k$ is a diffeomorphism.

Now suppose that $0$ lies inside a component $(a,b)$ of $\mathbb{R}\setminus\text{fix}(f)$. Since $f$ has a fixed point, we know that $(a,b)\neq\mathbb{R}$. Moreover, because the order reversing map $h$ fixes $(a,b)$, both end points $a$ and $b$ are finite, and $h(a)=b$ and $h(b)=a$.  Therefore $h^2$ fixes $a$, $b$, and $0$, and commutes with $f$. By Lemma~\ref{L: Kopell1}, 
$h^2(x)=x$ for each $x\in [a,b]$. This means that 
$h$ and $h^{-1}$ coincide inside $(a,b)$, so that $k$ is a 
diffeomorphism, and $kfk^{-1}=f^{-1}$.
\end{proof}

\begin{proof}[Proof of Theorem \ref{theorem-reversible-diffeo}]
Combine Propositions \ref{P: -reversible}
and \ref{P: +-strongReverse}.
\end{proof}

Since all non-trivial involutions in  $\Diffeo(\R)$ are conjugate to $x\mapsto -x$ we have the following explicit method to construct all strongly reversible elements of $\Diffeo(\R)$:

\begin{corollary} 
Let $g\in\Diffeo^+(\R)$. Then the following two 
conditions are equivalent:
\begin{enumerate}
\item The map $g$ is reversible in $\Diffeo(\R)$ by an order reversing diffeomorphism.
\item There exist
\begin{enumerate}
\item a formally invertible power series $P$ that is strongly reversed by the power series $-X$;
\item a point $p$ and an order preserving diffeomorphism $\phi:[p,\infty)\rightarrow [-p,\infty)$ such that $T_p\phi=P$; 
\item $h\in\Diffeo(\R)$,
such that $g= hfh^{-1}$, where
\[
f(x) =
\begin{cases}
\phi(x)  & \text{if   \,$x\geq p$,}\\
-\phi^{-1}(-x)& \text{if   \,$x\leq p$.}
\end{cases}
\]
\end{enumerate}
\end{enumerate}
\end{corollary}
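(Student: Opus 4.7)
The plan is to derive the corollary from Proposition~\ref{P: +-strongReverse} and the fact that every non-trivial involution in $\Diffeo(\R)$ is conjugate to $\sigma(x)=-x$.

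For $(2) \Rightarrow (1)$, I would first check that the map $f$ given by the piecewise formula is a well-defined order preserving homeomorphism of $\R$: the two branches agree at $p$ because any order preserving bijection $\phi \colon [p,\infty) \to [-p,\infty)$ must satisfy $\phi(p) = -p$, and consequently $-\phi^{-1}(-p) = -p$ as well. The formula is manifestly symmetric under $\sigma$, so $\sigma f \sigma = f^{-1}$ on $\R$. The subtle point is smoothness at $p$. Working in the local variable $X = x - p$, the right branch contributes the formal Taylor series $P$, whereas the left branch $x\mapsto -\phi^{-1}(-x)$, after using $T_{-p}\phi^{-1} = P^{-1}$, contributes $-P^{-1}(-X)$. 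These formal series coincide precisely when $-P(-X) = P^{-1}(X)$, which is exactly the statement that $P$ is strongly reversed by $-X$. With $f$ established as a diffeomorphism reversed by $\sigma$, the conjugate $g = hfh^{-1}$ is reversed by the order reversing involution $h\sigma h^{-1}$.

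For $(1) \Rightarrow (2)$, suppose $g$ is reversed by some order reversing element of $\Diffeo(\R)$. Proposition~\ref{P: +-strongReverse} gives that $g$ is strongly reversible, hence reversed by some involution $j \in \Diffeo(\R)$. If $g$ is the identity the conclusion is trivial with $p = 0$ and $\phi = \textup{id}$; otherwise $j$ is non-trivial and conjugate to $\sigma$ by some $h_0$, and $f := h_0 g h_0^{-1}$ satisfies $f(-x) = -f^{-1}(x)$. Since $f(x) + x$ tends to $\pm\infty$ as $x\to\pm\infty$, the intermediate value theorem produces $p \in \R$ with $f(p) = -p$. Setting $\phi := f|_{[p,\infty)}$ yields an order preserving diffeomorphism onto $[-p,\infty)$, and the identity $f(x) = -f^{-1}(-x)$ forces the piecewise formula on $(-\infty, p]$. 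The Taylor calculation of the previous paragraph, run in reverse, shows that $P := T_p\phi$ is strongly reversed by $-X$, and $h := h_0^{-1}$ conjugates $f$ back to $g$.

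The one calculation requiring real care is the Taylor series identification at $p$: because $\phi(p) = -p$ rather than $p$, the series $T_p\phi$ does not record the action of $\phi$ on a fixed point, but the condition that $P$ is strongly reversed by $-X$ is nevertheless exactly what encodes the $C^\infty$ matching of the two branches of $f$ at the point $p$.
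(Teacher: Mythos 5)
Your argument is correct and is exactly the reasoning the paper intends: the corollary is stated as an immediate consequence of Proposition~\ref{P: +-strongReverse} together with the fact that every non-trivial involution is conjugate to $x\mapsto -x$, and your write-up supplies precisely the missing details (locating the symmetry point $p$ with $f(p)=-p$, reading off $\phi$, and the Taylor-series matching $P(X)=-P^{-1}(-X)$ at $p$). No gaps.
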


Note that the graph of a map reversed by $x\mapsto -x$ is symmetric in the line $y=-x$. Refer to \cite{Ja2002,OF2004,Yo1994} for more information on strong reversibility of homeomorphisms.


Proposition~\ref{P: +-strongReverse} shows that elements of $\Diffeo^+(\R)$ that are reversed by order \emph{reversing} elements of $\Diffeo(\R)$ are strongly reversible in $\Diffeo(\R)$. There are, however,  elements of $\Diffeo^+(\R)$ that are reversed by order \emph{preserving} elements of $\Diffeo(\R)$ that are not strongly reversible in $\Diffeo(\R)$.  In fact, for order preserving diffeomorphisms, the properties of being reversible in $\Diffeo^+(\R)$ and strongly reversible in  $\Diffeo(\R)$ are logically independent. To demonstrate this, we must, in turn, find an example of an order preserving diffeomorphism that is  
\begin{enumerate}
\item neither reversible in $\Diffeo^+(\R)$ nor strongly reversible in $\Diffeo(\R)$;
\item not reversible in $\Diffeo^+(\R)$, but strongly reversible in $\Diffeo(\R)$;
\item  reversible in $\Diffeo^+(\R)$, but not strongly reversible in $\Diffeo(\R)$;
\item reversible in $\Diffeo^+(\R)$ and strongly reversible in $\Diffeo(\R)$.
\end{enumerate}
Examples of (i) and (ii) are readily constructed. For (ii), any non-trivial strongly reversible diffeomorphism which coincides with the identity map outside a compact set will suffice, because Theorem~\ref{theorem-reversible-diffeo+} tells us that such a map cannot be reversible in $\Diffeo^+(\R)$. We now give an example of (iii), and then a non-identity example of (iv).

\subsection{Example (iii)}\quad

 We shall describe an order preserving diffeomorphism $f$ that is 
reversible by order preserving diffeomorphisms, 
but not by order reversing involutions. 
The map described is not even strongly reversible as a homeomorphism. We assume some common knowledge of conjugacy in the homeomorphism group of the real line, which can be found, for example, in \cite{FiSc1955}.

We shall define $f$ to be an element of 
$\Diffeo^+(\R)$ such that $\text{fix}(f)=\mathbb{Z}$. To 
specify $f$ up to topological conjugacy, it remains 
only to describe the signature on $\mathbb{R}\setminus\mathbb{Z}$, 
which we represent by an infinite sequence of $+$ and $-$ symbols. A $+$ symbol corresponds to an interval $(n,n+1)$ for which $f(x)>x$ for each $x\in (n,n+1)$, and a $-$ symbol corresponds to an interval $(n,n+1)$ for which $f(x)<x$ for each $x\in (n,n+1)$.  The signature of a homeomorphism of $\mathbb{R}$ is discussed in more detail in \cite{FiSc1955}.
Suppose the signature of $f$ consists of  the $12$ symbol sequence
\begin{equation}\label{E: sequence}
+,+,+,-,-,+,-,-,-,+,+,-,
\end{equation}
repeated indefinitely, in both directions. The map $f$ can be chosen to be a diffeomorphism. A portion of a graph of such 
a function is shown in Figure~\ref{F: reverse}. 
It satisfies $hfh^{-1}=f^{-1}$, where $h$ is given by the 
equation $h(x)=x+6$. On the other hand, it is straightforward to see (or refer to \cite{FiSc1955}) that $f$ is not reversible by a non-trivial 
involution, as the doubly infinite sequence generated by \eqref{E: sequence}  read forwards is different from the same sequence read backwards.

\begin{figure}[ht]
\centering
\includegraphics[scale=1.0]{./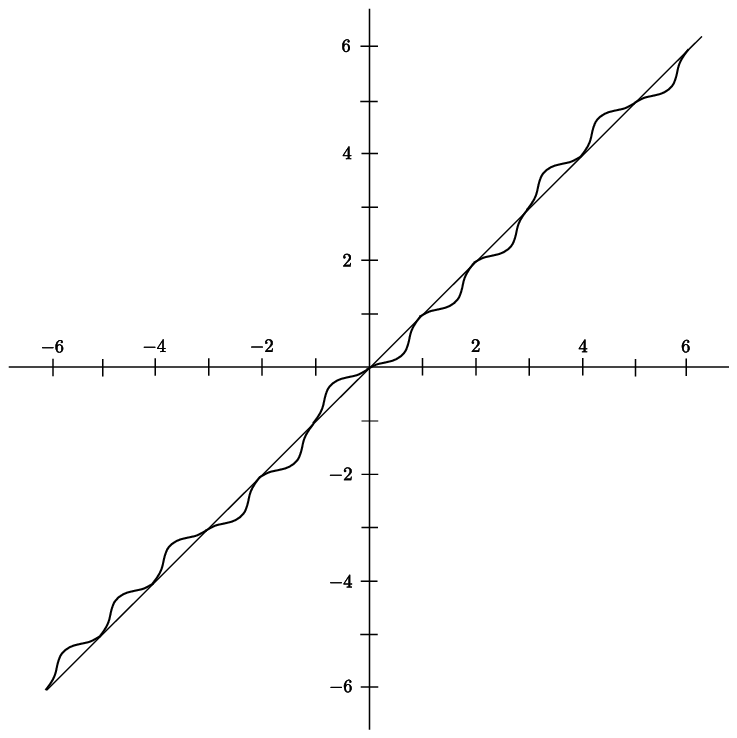}
\caption{}
\label{F: reverse}
\end{figure}

\subsection{Example (iv)}\quad

Now we give an example of a non-trivial order preserving diffeomorphism $f$ that is reversible 
in $\Diffeo^+(\R)$ \emph{and} strongly reversible in $\Diffeo(\R)$.

Choose any non-identity function $\phi$ in $\Diffeo^+([0,1])$, with
$T_0\phi=T_1\phi=X$, and define in turn
\[
\begin{array}{rclr}
\tau(x) &=& -\phi(x),&\ 0\le x\le1,\\
\tau(x) &=& \tau^{-1}(x),&\ -1\le x<0,\\
\tau(x+2) &=& -\tau(-x)-2,&\ -1<x\le1,
\end{array}
\]
and extend $\tau$ to $\R$ by requiring that
\[
\tau(x+4) =\tau(x)-4,\qquad x\in\R.
\]
Then $\tau$ is an involutive element of $\Diffeo^-(\R)$, so that $f=-\tau$ is an element of $\Diffeo^+(\R)$ that is strongly reversible in $\Diffeo(\R)$.

\begin{figure}[ht]
\centering
\includegraphics[scale=1.0]{./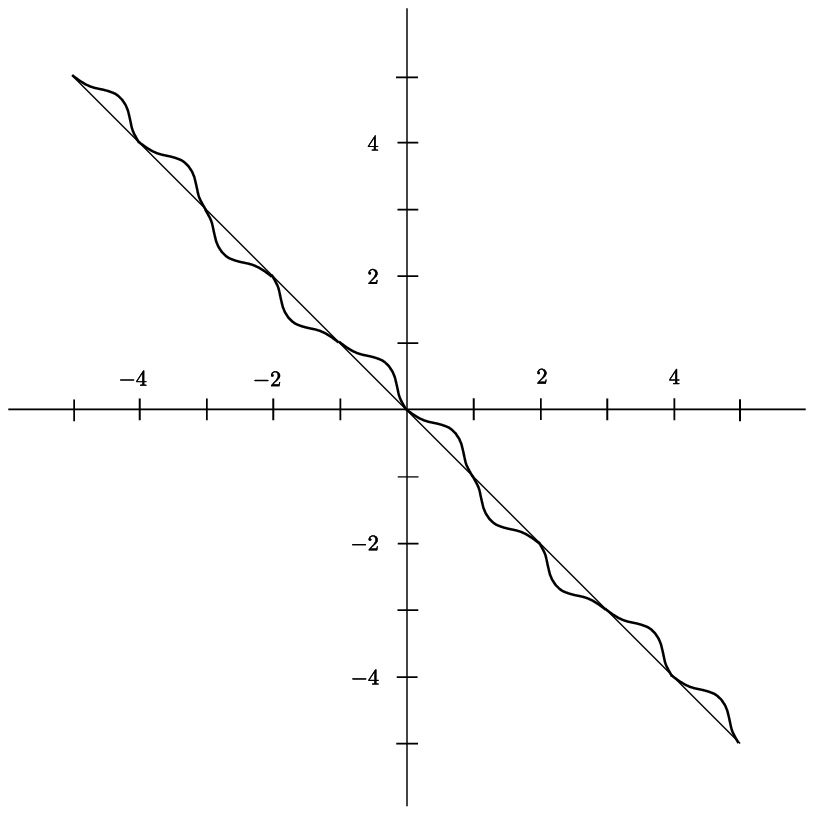}
\caption{}
\label{F: two-way-reverse}
\end{figure}

On the other hand, $\tau(x+2) = -\tau(-x)-2$ for all $x\in\R$, so
\[
f(x+2) = -\tau(x+2) = \tau(-x)+2 = f^{-1}(x)+2.
\]
Hence $f$ is also reversible in $\Diffeo^+(\R)$.

\subsection{Composites of involutions}\label{SS: +-compositesInvolutions}\quad

\begin{proposition}\label{P: 3involutions}
Each member of $\Diffeo^-(\R)$ can be expressed as a composite of three involutions.
\end{proposition}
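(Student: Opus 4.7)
The plan is to exploit Lemma~\ref{L: fixedFreeReverse} by constructing, for each $g\in\Diffeo^-(\R)$, an involution $\sigma\in\Diffeo^-(\R)$ such that $g\sigma$ is a fixed point free element of $\Diffeo^+(\R)$. Once this is done, Lemma~\ref{L: fixedFreeReverse} gives $g\sigma=\sigma_1\sigma_2$ for involutions $\sigma_1,\sigma_2\in\Diffeo(\R)$, and $g=\sigma_1\sigma_2\sigma$ is a composite of three involutions. After an initial conjugation we may assume $g(0)=0$; writing $h=g^{-1}$, the desired property of $\sigma$ becomes that the graphs of $\sigma$ and $h$ are disjoint. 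Since the continuous function $\sigma-h$ is never zero, this reduces to arranging $\sigma>h$ pointwise on $\R$.

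The involution $\sigma$ is built from a smooth decreasing bijection on a half-line. Pick any $q>0$ and choose a smooth, strictly decreasing bijection $\phi\colon[q,\infty)\to(-\infty,q]$ with $\phi(q)=q$, with $T_q\phi=-X$ (so all derivatives of $\phi$ at $q$ agree with those of $y\mapsto 2q-y$), and with $\phi(y)>\max\{h(y),g(y)\}$ for every $y\ge q$. Near $q$ the profile $\phi(y)=2q-y$ verifies the Taylor condition automatically, while far from $q$ we may let $\phi$ decay slowly enough to remain above both $h$ and $g$, each of which tends to $-\infty$. Now define
\[
\sigma(x)=\begin{cases}\phi(x)&\text{if }x\ge q,\\ \phi^{-1}(x)&\text{if }x\le q.\end{cases}
\]
Since the formal series $-X$ is its own inverse in $\mathcal P$, the two branches of $\sigma$ glue to infinite order at $q$, giving a $C^\infty$ order reversing involution on $\R$.

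Verification that $\sigma>h$ pointwise splits into three cases. On $[q,\infty)$ the inequality $\sigma=\phi>h$ is built into $\phi$. On $[h^{-1}(q),q]$ we have $\sigma(x)=\phi^{-1}(x)\ge q\ge h(x)$, and strict inequality holds because $\sigma(x)=q$ only at $x=q$, whereas $h(q)<q$. On $(-\infty,h^{-1}(q)]$ the value $h(x)$ lies in $[q,\infty)$, so applying the decreasing $\phi$ converts the desired inequality $\phi^{-1}(x)>h(x)$ into $x<\phi(h(x))$; and the dominance built into $\phi$ yields $\phi(h(x))>g(h(x))=x$. Thus $g\sigma$ is fixed point free, and the proposition follows. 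The main obstacle lies in simultaneously satisfying, by a single smooth $\phi$, the Taylor condition at $q$ and the dominance condition at infinity; since these constraints are localised in disjoint regions of $[q,\infty)$, they can both be met by a careful $C^\infty$ interpolation.
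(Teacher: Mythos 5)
Your proposal is correct and follows essentially the same route as the paper: dominate the given order reversing map (or its inverse) by an order reversing involution so that the composite is fixed point free, then invoke Lemma~\ref{L: fixedFreeReverse} to split that composite into two involutions. The paper simply asserts the existence of the dominating involution $\tau$ with $\tau(x)>f(x)$ and composes on the left, whereas you compose on the right and supply the explicit construction of the involution from a half-line profile $\phi$ with $T_q\phi=-X$; this fills in a detail the paper treats as routine but does not change the argument.
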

\begin{proof}
Given an element  $f$ of $\Diffeo^-(\R)$, choose an involution $\tau$ such that $\tau(x)>f(x)$ for each real number $x$. Then $\tau f(x)>x$. The map $\tau f$ is strongly reversible, by Lemma~\ref{L: fixedFreeReverse}. Therefore $f$ can be expressed as a composite of three involutions. 
\end{proof}

\begin{proof}[Proof of Theorem~\ref{T: 4involutions}]
Given an element $f$ in $\Diffeo^+(\R)$, the map $-f$, a member of $\Diffeo^-(\R)$, can be expressed as a composite of three involutions. The result follows immediately from Propositioni~\ref{P: 3involutions}.
\end{proof}

\subsection{Composites of reversible maps}\label{SS: +-compositesReversible}\quad

\begin{proposition}
Each member of $\Diffeo(\R)$ can be expressed as a composite of two strongly reversible maps.
\end{proposition}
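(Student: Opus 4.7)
The plan is to deduce this immediately from Theorem~\ref{T: 4involutions}, which was already used to obtain the parenthetical remark made after that theorem. Given $f\in\Diffeo(\R)$, I would first invoke Theorem~\ref{T: 4involutions} to write
\[
f = \sigma_1\sigma_2\sigma_3\sigma_4,
\]
where each $\sigma_i$ is an involution in $\Diffeo(\R)$. I would then set $u=\sigma_1\sigma_2$ and $v=\sigma_3\sigma_4$, so that $f=uv$.

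Next I would recall the characterisation of strong reversibility stated in the introduction: an element of a group is strongly reversible (that is, reversed by an involution) if and only if it can be written as a product of two involutions. Since $u$ and $v$ are, by construction, each a product of two involutions, both are strongly reversible. This exhibits $f$ as a composite of two strongly reversible diffeomorphisms.

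There is essentially no obstacle, because the hard work sits inside Theorem~\ref{T: 4involutions} and inside the general fact that products of two involutions coincide with strongly reversible elements. The only point worth checking is that the equivalence between being strongly reversible and being a product of two involutions is genuinely an equivalence in any group (which is the standard computation: if $\tau f\tau = f^{-1}$ with $\tau^2=\operatorname{id}$, then $f=\tau\cdot(\tau f)$ with both factors involutions, and conversely a product of two involutions $\alpha\beta$ is reversed by $\alpha$).
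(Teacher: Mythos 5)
Your proposal is correct and matches the paper's own argument, which likewise deduces the result immediately from Theorem~\ref{T: 4involutions} (together with Proposition~\ref{P: 3involutions}) by grouping the four involutions into two pairs, each pair being a product of two involutions and hence strongly reversible.
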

\begin{proof}
Immediate from Proposition~\ref{P: 3involutions} and Theorem~\ref{T: 4involutions}.
\end{proof}

The corresponding result for homeomorphisms is due to
Fine and Schweigert \cite{FiSc1955}.


\section{Open questions}\label{S: open questions}

We list two open problems which have emerged from our study.

\begin{opening}
What is the smallest positive integer $m$ such that each member of $\Diffeo^+(\R)$ can be expressed as a composite of $m$ reversible maps?
\end{opening}

Certainly $m>1$, and we have proven that $m\leq 4$. 

\begin{opening}
What can be said about reversibility in the group of homeomorphisms of the real line that are $n$ times continuously differentiable in both directions, or in the group of homeomorphisms of the real line that are real analytic in both directions?
\end{opening}

\section{Acknowledgements}

The authors thank the referees for useful comments.

\end{document}